\numberwithin{equation}{section}
\theoremstyle{plain}
\newtheorem{theorem}{Theorem}[section]
\newtheorem{thm}{Theorem}[section]
\newtheorem{proposition}[theorem]{Proposition}
\theoremstyle{definition}
\newtheorem{definition}[theorem]{Definition}
\newtheorem{defn}[theorem]{Definition}
\newtheorem{assu}[theorem]{Assumption}
\newtheorem{prop}[theorem]{Proposition}
\newtheorem{rem}[theorem]{Remark}
\newtheorem{cor}[theorem]{Corollary}
\DeclareMathAlphabet{\mathbbo}{U}{bbold}{m}{n}
\newcommand{\bbR}{\mathbb{R}}
\newcommand{\cDc}{\mathcal{D}}
\newcommand{\mcP}{\mathcal{P}}
\newcommand{\Rn}{\bbR^d}
\newcommand{\bRb}{\bbR}
\newcommand{\rmd}{\mathrm{d}}
\newcommand{\E}{\mathbb{E}}
\newcommand{\EE}{\mathbb{E}}
\newcommand{\PP}{\mathbb{P}}
\newcommand{\R}{\mathbb{R}}
\title{Self-interacting diffusions: long-time behaviour and exit-problem in the convex case}
\author{
A.~Aleksian$^{1,a}$, P.~Del Moral$^{2,b}$, A.~Kurtzmann$^{3,c}$\\
and J.~Tugaut$^{1,d}$\\[5pt]
\small{$^1$Universit\'e Jean Monnet, CNRS UMR 5208,}\\
\small{Institut Camille Jordan, Maison de l'Universit\'e, 10 rue Tr\'efilerie,}\\
\small{CS 82301, 42023 Saint-\'Etienne Cedex 2, France}\\
\small{$^2$ INRIA Bordeaux Research Center, France.}\\
\small{$^3$ Universit\'e de Lorraine, Institut Elie Cartan de Lorraine, CNRS UMR 7502,}\\
\small{Vandoeuvre-l\`es-Nancy, F-54506, France.}\\[5pt]
\small{$^a$ ashot.aleksian@univ-st-etienne.fr, $^b$ pierre.del-moral@inria.fr}\\
\small{$^c$ aline.kurtzmann@univ-lorraine.fr, $^d$ julian.tugaut@univ-st-etienne.fr}
}
\begin{document}

\maketitle

\begin{abstract}

We study a class of time-inhomogeneous diffusion: the self-interacting one. We show a convergence result with a rate of convergence that does not depend on the diffusion coefficient. Finally, we establish a so-called Kramers' type law for the first exit-time of the process from domain of attractions when the landscapes are uniformly convex.

\emph{Keywords} : Self-interacting diffusion, long-time behaviour, exit-time, Kramers' law, deterministic flow.

\emph{Mathematics Subject Classification} :  60K35, 60H10, 60J60

\end{abstract}

\section{Introduction}
\label{s:intro}

%


In this work, we are interested in a time-inhomogeneous diffusion. More precisely, we study the following specific diffusion, driven by the Stochastic Differential Equation (SDE):
\begin{equation}
\label{eq:sid}
\mathrm{d}X_t = \sigma \rmd B_t - \left( \nabla V (X_t) + \frac{1}{t}\int_0^t \nabla W( X_t -  X_s) \rmd s \right) \rmd t, \quad X_0 = x\in \Rn
\end{equation}
where $V,W$ are two potentials on $\bRb^d$ and $\sigma>0$. The precise assumptions on the potentials will be given later in Subsection~\ref{ss:assu}.
{\color{black} 
We can already notice that the current position of the process $X_t$ depends on the whole past trajectory of the process $(X_s)_{0\le s\le t}$ through the interaction potential $W$ appearing in the drift term. We call this kind of process a path-interaction process.}

\subsection{History}
\label{ss:history}

Path-interaction processes have been introduced by Norris, Rogers and Williams during the late 80s in~\cite{NRW}. Since this period, they have been an intensive research area. Under the name of Brownian Polymers, Durrett and Rogers~\cite{DR} studied a family of self-interacting diffusions, as a model for the shape of a growing polymer.  Denoting by $X_t$ the location of the end of the growing polymer at time $t$, the process $X$ satisfies a SDE driven by a Brownian motion,  with a drift term depending on its own occupation measure. One is then interested in finding the scale for which the process converges to a non trivial limit. Later, another model of growing polymer has been introduced by Bena\"im, Ledoux and Raimond~\cite{BLR}, for which the drift term depends on its own empirical measure. Namely, they have studied  the following process living in a compact
smooth connected Riemannian manifold~$M$ without boundary:
\begin{equation*}
\mathrm{d}X_t = \sum_{i=1}^N F_i(X_t)\circ\mathrm{d}B_t^i - \int_M \nabla_x W(X_t,y)\mu_t(\mathrm{d}y)\mathrm{d}t,
\end{equation*}
where $W$ is a (smooth) interaction potential, $(B^1,\cdots,B^N)$
is a standard Brownian motion on $\mathbb{R}^N$, $\mu_t:=\frac{1}{t} \int_0^t \delta_{X_s} \rmd s$ and the symbol
$\circ$ stands for the Stratonovich stochastic integration. 
In the compact setting, they have shown that the asymptotic behaviour of the empirical measure of the process can be related to the analysis of some deterministic dynamical flow. Later, Bena\"im and Raimond~\cite{BR} gave sufficient conditions for the almost sure convergence of the empirical measure (again in the compact setting). More recently, Raimond~\cite{Ra} has generalized the previous study and has proved that for the solution of the SDE living on a compact manifold $$\rmd X_t = \rmd B_t - \frac{g(t)}{t} \int_0^t \nabla_x V(X_t, X_s) \rmd s \ \rmd t$$ unless $g$ is constant, the approximation of the empirical measure by a deterministic flow is no longer valid.

Similar questions have also been answered in the non-compact setting, that is~$\Rn$. Chambeu and Kurtzmann~\cite{CK} have studied the ergodic behaviour of the self-interacting diffusion depending on the empirical mean of the process. They have proved, under some convexity assumptions (ensuring the non-explosion in finite time of the process), a convergence criterion for the diffusion solution to the SDE $$\rmd X_t = \rmd B_t - g(t) \nabla V\left(X_t - \frac{1}{t}\int_0^t X_s \rmd s \right)  \ \rmd t$$
where $g$ is a positive function. 
This model could represent for instance the behaviour of some social insects, as ants who are marking their paths with the trails' pheromones. This paper shows in particular how difficult is the study of general self-interacting diffusions in non-compact spaces as in~\cite{AK}, driven by the generic equation $$\rmd X_t = \rmd B_t - \frac{1}{t} \int_0^t \nabla_x V(X_t, X_s) \rmd s \ \rmd t.$$ Nevertheless, if the interaction function $V$ is symmetric and  uniformly convex, then Kleptsyn and Kurtzmann~\cite{kk-ejp} obtained the limit-quotient ergodic theorem for the self-attracting diffusion. Moreover, they managed to obtain the rate of convergence.

\subsection{Close processes}
\label{ss:closedmodel}

Another problem related to this paper is the diffusion corresponding to McKean-Vlasov partial differential equation. This corresponds to the measure-dependent drift diffusion governed by the SDE
\begin{equation}
\rmd X_t = \rmd B_t - \nabla W * \nu_t (X_t)\rmd t
\end{equation}
where $\nu_t:= \mathcal{L}(X_t)$, $W$ is a smooth convex potential and $\ast$ stands for the convolution. The asymptotic behaviour of $X$ has been studied by various authors these last years, see for instance Cattiaux, Guillin and Malrieu~\cite{CGM}. It turns out that under some assumptions, the law $\nu_t$ converges, as $t$ goes to infinity, to the (unique if $W$ is strictly convex) probability measure solution to the equation
\[
\nu:=\frac{1}{Z} \exp\left(-2W\ast\nu\right)\,,
\]
where $Z:=Z(\nu)$ is the normalization constant. In the latter paper, the authors use a particles system to prove both a convergence result (with convergence rate) and a deviation inequality for solutions of granular media equation when the interaction potential is strictly convex. To this end, they use a uniform propagation of chaos property and a control in Wasserstein distance of solutions starting from different initial conditions. 

A related question to this problem concerns the first exit-times from domains of attraction for the following motion
\begin{equation}\label{eq:MKV}
\rmd X_t^\sigma = \sigma \rmd B_t - \nabla V(X_t^\sigma) \rmd t - \nabla W\ast\nu_t^\sigma(X_t^\sigma) \rmd t\,,
\end{equation}
where $V$ is a potential, $\nu_t^\sigma:= \mathcal{L}(X_t^\sigma)$ and $\sigma >0$. This was addressed by Herrmann, Imkeller and Peithmann~\cite{HIP}, who exhibited a Kramers' type law for the diffusion exit from the potential's domains of attraction by a large deviations principle for the self-stabilizing diffusion (which is a peculiar instance of McKean-Vlasov diffusion). To get this, they reconstructed the Freidlin-Wentzell theory for the self-stabilizing
diffusion. More precisely, they established a large deviations principle with a good rate function. The exit-problem for the McKean-Vlasov diffusion has also been already studied recently, with a different method. More precisely, in~\cite{T2011f}, Tugaut analyses the exit-problem (time and location) in convex landscapes, showing the same result as Herrmann, Imkeller and Peithmann, but without reconstructing the proofs of Freidlin and Wentzell. Then, he generalized very recently his results in the case of double-wells landscape in~\cite{JOTP}. In~\cite{Tug14c,Alea}, he does not use large deviations principle but a coupling method
between the time-homogeneous diffusion $$\rmd Y_t^\sigma = \sigma \rmd B_t - \nabla V (Y_t^\sigma) \rmd t - \nabla W (Y_t^\sigma - m) \rmd t$$ (where $m$ is the unique point at which the vector field $\nabla V$ equals 0) and the McKean-Vlasov diffusion so that the results on the exit-time of $Y^\sigma$ can be used for the exit-time of the self-stabilizing diffusion~\eqref{eq:MKV}.

\subsection{The questions that we study here}
\label{ss:problems}

A large family of path-dependent processes has been studied by Saporito, see for instance~\cite{yuri}. He proves, with his co-authors, existence and uniqueness of such processes. The difference with the process studied here is that we normalize the occupation measure. In the current paper, we also prove the existence and uniqueness result for general potentials $V$ and $W$, which are not necessarily convex.

A second result that we are obtaining is related to the asymptotic behaviour. Indeed, after proving the existence and the uniqueness of the solution to Diffusion~\eqref{eq:sid}, we are studying the convergence in long-time of the probability measure $\mu_t$. The idea is similar to the one in \cite{kk-ejp}. 

The present paper also deals with the exit-time problem of~\eqref{eq:sid}. We prove that the first exit-time $\tau$ of the diffusion from some domain satisfies a Kramers' type law in the following sense:
\[
\lim_{\sigma\to0}\frac{\sigma^2}{2}\log(\tau)=H>0\,,
\]
where the convergence holds in probability.

We could adapt the techniques introduced by Herrmann, Imkeller and Peithmann but only in the case of a convex potential $V$. Our aim is to generalize the study also to non-convex potentials. In the present work, we will solve the exit-problem (time and location) for Diffusion~\eqref{eq:sid}. As will be shown later, the exit-location can be easily obtained once we know the asymptotics of the first exit-time.

\subsection{Outline}
\label{ss:outline}

Our paper is divided into three parts. First, Section~\ref{s:results} is devoted to the explanation of the precise assumptions and the statement of the main results. 
After that, in Section~\ref{s:exitproblem}, we prove Theorem~\ref{dell} and Corollary~\ref{hphp}, that is we establish the Kramers' type law and the exit-location result. To do so, we first provide some intermediate results. Finally, some possible extensions  are discussed in the Appendix. 
Before finishing the introduction, we give the notations used in the paper.

\subsection{Notations}
\label{ss:notations}

The parameters $\kappa$, $\xi$, $\epsilon$ and $\delta$ are arbitrarily small. The constants are denoted by $C$ as usual and generic.

As usual, we denote by $\mathcal{M}(\Rn)$ the space of signed
(bounded) Borel measures on $\Rn$ and by $\mathcal{P}(\Rn)$ its
subspace of probability measures. 

 In the sequel, $(\cdot,\cdot)$ stands for the Euclidean
scalar product and $|\cdot|$ is the associated norm. 

We first introduce the notion of positively invariant domain.

\begin{definition}
\label{stasta}
Let $d$ be any positive integer. Let $\mathcal{G}$ be a subset of $\mathbb{R}^{d}$ and let $U: \mathbb{R}^{d} \to \mathbb{R}^d$ be a vector field  satisfying some ``good assumptions''. For all $x\in\mathbb{R}^d$, we consider the dynamical system $\rho_t(x)=x+\int_0^tU\left(\rho_s(x)\right) \rmd{s}$. We say that the domain $\mathcal{G}$ is positively invariant for the flow generated by $U$ if the orbit $\left\{\rho_t(x)\,;\,t\in\mathbb{R}_+\right\}$ is included in $\mathcal{G}$ for all $x\in\mathcal{G}$.
\end{definition}



Let us recall the definition of the Wasserstein distance.
\begin{definition} 
  For $\mu_1,\mu_2 \in \mathcal{P}(\Rn)$, the quadratic Wasserstein distance is defined as $$\mathbb{W}_2(\mu_1,\mu_2) := \left(\inf \{ \mathbb{E}(|Z_1-Z_2|^2)\}\right)^{1/2},$$ where the infimum is taken over all the random variables such that $\mathcal{L}(Z_1)= \mu_1$ and $\mathcal{L}(Z_2)= \mu_2$. This corresponds to the minimal $L^2$-distance taken over all the couplings between $\mu_1$ and $\mu_2$.
  
  Similarly, the Wasserstein distance $\mathbb{W}_{2k}$ is defined as 
  $$\mathbb{W}_{2k}(\mu_1,\mu_2) := \left(\inf \{ \mathbb{E}(|Z_1-Z_2|^{2k})\}\right)^{1/(2k)}.$$
\end{definition}  

In the following, for readability issue, we will omit the $\sigma-$exponent for the process $X$ as well as the occupation measure $\mu_t$. Nevertheless, the reader has to keep in mind that the process $(X_t,t\geq0)$ and $\mu_t$ do depend on $\sigma$.

\begin{defn}
\label{def:m}
The minimizer of $V$ is denoted as $m$.
\end{defn}

We also introduce the following mapping on the probability measures:
\begin{equation*}
\Pi_\sigma(\mu)(dx):=\frac{e^{-\frac{2}{\sigma^2}\left(V(x)+W\ast\mu(x)\right)}}{\int_{\bRb^n}e^{-\frac{2}{\sigma^2}\left(V(y)+W\ast\mu(y)\right)dy}}\,dx\,.
\end{equation*}

\section{Assumptions and main results}
\label{s:results}

\subsection{Assumptions}
\label{ss:assu}

In this section we introduce the assumptions considered in the paper.

\begin{assu}
\label{assu:regu}
We assume some regularity for the potentials $V$ and $W$: $V \in \mathcal{C}^2(\mathbb{R}^d)$, $W\in\mathcal{C}^2(\mathbb{R}^d)$. Also, without loss of generality, we consider only potentials such that $V \geq 0, \; W \geq 0$.
\end{assu}
Assumption~\ref{assu:regu} is usual in SDE. Since we use It\^o calculus techniques, the differentiability assumption above is necessary.

\begin{assu}
\label{assu:growth1}
$V$ and $W$ (and their first two derivatives) have at most a polynomial growth. In other words, there exists a polynomial function~$P$ of degree~$2k$ such that $P(|x|)\geq1$ for any $x\in\bRb^n$, and
\begin{align}
\label{domination}
&|V(x)|+|W(x)|\leq P(|x|)\,,\\
\label{domination2}
&|\nabla V(x)|+|\nabla W(x)|\leq P(|x|)\,,\\
\label{domination3}
\mbox{and}\quad&\|\nabla^2 V(x)\|+\|\nabla^2 W(x)\|\leq P(|x|)\,.
\end{align}
\end{assu}

This assumption is used in the paper \cite{kk-ejp} to establish the rate of convergence towards the invariant probability measure. We come back to this question in Section \ref{ss:convrate}.

\begin{rem}
Note, that without any loss of generality, we can choose polynomial~$P$ to be such that $P(|x|) = C(1 + |x|^k)$. Then, the following property holds: there exists a constant $\gamma>0$ such that $P(|x + y|) \leq \gamma(P(|x|) + P(|y|))$.
\end{rem}

We also need the following assumption to establish the exit-time result:
\begin{assu}
\label{assu:growth2}
There exist $\rho,\alpha>0$ such that for any $x\in\bRb^n$, we have $\nabla^2V(x)\geq\rho{\rm Id}$ and $\nabla^2W(x)\geq\alpha{\rm Id}$. The unique minimizer of $V$ is denoted as $m$ and the unique minimizer of $W$ is $0$.
\end{assu}


To ensure the existence of the process, we will also use the following assumption, that will help us to exhibit a Lyapunov function:
\begin{assu}
\label{assu:existence}
$\underset{\vert x\vert\rightarrow \infty}{\lim} V(x) =+ \infty$,  $\underset{\vert x\vert\rightarrow \infty}{\lim} \frac{\vert\nabla V(x)\vert^2}{V(x)} = +\infty$ and there exists $a>0$ such that 
$\Delta V(x)\leq aV(x)$.
\end{assu}

\begin{rem}
By the latter growth condition, $\vert\nabla V\vert^2 - \Delta V$ is bounded by below.
\end{rem}

Finally, we will eventually assume the following, meaning that $W$ is rotationally invariant:
\begin{assu}
\label{assu:spherical}
There exists a function $G$ from $\bRb_+$ to $\bRb$ such that $W(x)=G(|x|)$.
\end{assu}

The assumptions on the domain from which the diffusion $X$ exits are the following.

\begin{assu}
\label{assu:domain}
The domain $\cDc$ is open and satisfies the following hypotheses:
\begin{enumerate}
 \item Let $\varphi$ be the solution to the following equation
\[
\varphi_t=x_0-\int_0^t\nabla V(\varphi_s)ds-\int_0^t\frac{1}{s}\int_0^s\nabla W(\varphi_s-\varphi_r)drds\,.
\]
Then, for any $t\in\bRb_+$, we have $\varphi_t\in\cDc$ and $\lim_{t\to\infty}\varphi_t=m\in\cDc$.
 \item The domain $\cDc$ is positively invariant for the flow generated by the vector field $x\mapsto-\nabla V(x)-\nabla W\ast\delta_m(x)$.
 \item For any $x\in\partial\cDc$, define the flow $\rho(x)$ as the solution to the equation
\[
\rho_t(x)=x-\int_0^t\nabla V(\rho_s(x))ds-\int_0^t\nabla W(\rho_s(x)-m)ds\,.
\]
Moreover, assume that the following limit holds: $$\lim_{t\to\infty}\rho_t(x)=m.$$
\end{enumerate}
\end{assu}

The assumption i) guarantees that, starting from  fixed point $x_0$, the deterministic process defined by \eqref{eq:sid} with $\sigma = 0$ converges towards the point of attraction $m$. Of course, we expect $X_t$ to follow this path with high probability for small enough $\sigma > 0$. In fact, as will be shown later, its empirical measure $\mu_t$ will also converge towards $\delta_m$ and, after some deterministic time, with high probability, will stay inside a defined-in-advance neighbourhood of $\delta_m$. We call this effect: stabilisation of the empirical measure. 

That leads to assumptions ii) and iii) above. After the ``stabilisation time'' we expect our drift term $V + W*\mu_t$ to have a similar effect as $V + W*\delta_m$. Thus, the last two assumptions guarantee that the process $X_t$ will forever tend to stay inside the domain $\mathcal{D}$ and be attracted towards the point $m$. This is a necessary assumption when considering the exit from a stable domain of attraction solely under the influence of small noise.

\subsection{Existence and uniqueness}
\label{ss:existence}



The first results that we will provide are about the existence and the uniqueness of the solution to the SDE~\eqref{eq:sid}. 

\begin{thm}
\label{thm:existence-W1}
Under the Assumptions~\ref{assu:regu} and~\ref{assu:existence}, for any $x_0 \in \mathbb{R}^n$, there exists a unique global strong solution $(X_t,t\geq 0)$ to Equation~\eqref{eq:sid}.
\end{thm}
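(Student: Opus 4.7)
My plan is the classical truncation-plus-Lyapunov strategy for SDEs with locally Lipschitz coefficients, adapted to the memory term and its apparent singularity at $t=0$. First I would observe that the singularity is only apparent: for any continuous path $y$,
$$
\frac{1}{s}\int_0^s \nabla W(y_s-y_u)\,\rmd u \xrightarrow[s\to 0^+]{} \nabla W(0),
$$
so the drift functional extends continuously to $[0,T]$ and is bounded as soon as $y$ is bounded. For each $n\ge 1$ I would then pick smooth truncated potentials $V_n,W_n\in\mathcal{C}^2(\Rn)$ that agree with $V,W$ on $B(0,n)$ and whose gradients are globally Lipschitz with some constant $L_n$. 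On $\mathcal{C}([0,T];\Rn)$ equipped with the sup norm, the Picard map
$$
(\Phi y)_t = x+\sigma B_t - \int_0^t \nabla V_n(y_s)\,\rmd s - \int_0^t\frac{1}{s}\int_0^s \nabla W_n(y_s-y_u)\,\rmd u\,\rmd s
$$
satisfies $\sup_{t\le T}|(\Phi y)_t-(\Phi z)_t|\le 3L_n T\,\sup_{t\le T}|y_t-z_t|$ (one contribution from $\nabla V_n$ plus two from $\nabla W_n$, which depends on both $y_s$ and $y_u$), so $\Phi$ is a strict contraction for $T<1/(3L_n)$; iterating gives a unique adapted continuous fixed point $X^n$ on $[0,\infty)$.

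Next I would glue these truncated solutions into a maximal one. Setting $\tau_n:=\inf\{t\ge 0:|X^n_t|\ge n\}$, pathwise uniqueness forces $X^m=X^n$ on $[0,\tau_{m\wedge n}]$ for $m\le n$, so the $\tau_n$ are non-decreasing and define $\tau_\infty:=\lim_n\tau_n$; patching produces a strong solution $X$ on $[0,\tau_\infty)$. To reach non-explosion I would apply It\^o's formula to $V(X_{t\wedge\tau_n})$ and take expectation:
$$
\EE V(X_{t\wedge\tau_n}) = V(x) - \EE\!\int_0^{t\wedge\tau_n}\!|\nabla V(X_s)|^2\,\rmd s - \EE\!\int_0^{t\wedge\tau_n}\!\nabla V(X_s)\cdot\frac{1}{s}\!\int_0^s\!\nabla W(X_s-X_u)\,\rmd u\,\rmd s + \frac{\sigma^2}{2}\EE\!\int_0^{t\wedge\tau_n}\!\Delta V(X_s)\,\rmd s.
$$
Cauchy--Schwarz followed by Jensen absorbs half of $|\nabla V(X_s)|^2$ against the interaction term, leaving a residual dominated by a time-averaged polynomial in $|X_s|,|X_u|$ via Assumption~\ref{assu:growth1}. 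The superlinearity $|\nabla V|^2/V\to\infty$ then makes the remaining $-\tfrac12|\nabla V|^2$ dominate both this residual and the $\tfrac{a\sigma^2}{2}V$ coming from $\Delta V\le aV$, yielding $\sup_n\EE V(X_{t\wedge\tau_n})\le C(t,x)$ uniformly in $n$. Since $V(x)\to\infty$ as $|x|\to\infty$, Markov's inequality gives
$$
\PP(\tau_n\le t)\le \frac{C(t,x)}{\inf_{|x|=n}V(x)}\xrightarrow[n\to\infty]{}0,
$$
and hence $\tau_\infty=\infty$ a.s., producing a global strong solution.

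Global pathwise uniqueness follows from the same It\^o argument applied to $|X_t-\tilde X_t|^2$ for two solutions driven by the same Brownian motion: the local Lipschitz bound on $\nabla V,\nabla W$ restricted to $B(0,n)$ together with the standard domination $\int_0^t\tfrac1s\int_0^s|\cdot|^2\,\rmd u\,\rmd s\le\int_0^t\sup_{r\le s}|\cdot|^2\,\rmd s$ yields a linear Gronwall inequality forcing $X\equiv\tilde X$ on $[0,\tau_n\wedge\tilde\tau_n]$, and $n\to\infty$ gives the result. I expect the non-explosion step to be the main obstacle: the memory irreducibly couples $X_s$ to the entire past $\{X_u\}_{u\le s}$, so even after Cauchy--Schwarz the residual $|\nabla W(X_s-X_u)|^2$ must still be controlled by $|\nabla V(X_s)|^2$, which is exactly what the super-linear dissipativity of Assumption~\ref{assu:existence} together with the polynomial growth in Assumption~\ref{assu:growth1} deliver.
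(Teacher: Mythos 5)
Your local existence, gluing via stopping times, and pathwise uniqueness steps are all standard and fine, and match what the paper does (which simply cites a textbook for this part). The genuine gap is in the non-explosion argument. After Cauchy--Schwarz and Jensen the interaction term leaves the residual $\tfrac{1}{2s}\int_0^s|\nabla W(X_s-X_u)|^2\,\rmd u$, and you claim the superlinearity $|\nabla V|^2/V\to\infty$ lets $-\tfrac12|\nabla V(X_s)|^2$ dominate it. This fails for two reasons. First, the residual depends on the past positions $X_u$ for $u<s$, which $|\nabla V(X_s)|^2$ carries no information about whatsoever. Second, even the $X_s$-part of the residual is a polynomial of degree up to $4k$ (coming from Assumption~\ref{assu:growth1}, which in any case is not among the hypotheses of the theorem), whereas Assumption~\ref{assu:existence} gives a lower bound on $|\nabla V|^2$ only \emph{relative to} $V$; if $V$ grows slower than degree $4k$ there is nothing to absorb the residual, and no assumption rules this out.

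The paper sidesteps this by choosing the time-dependent Lyapunov functional $\mathcal{E}_t(x):=V(x)+\frac{1}{t}\int_0^t W(x-X_s)\,\rmd s$, whose gradient in $x$, evaluated at $x=X_t$, is exactly the negative of the drift of \eqref{eq:sid}. Applying the It\^o--Ventzell formula to $\mathcal{E}_{t\wedge\tau_n}(X_{t\wedge\tau_n})$ then produces the full square $-|\nabla\mathcal{E}_s(X_s)|^2$ as a manifestly non-positive term that can simply be dropped --- there is no cross term left to absorb, which is precisely what your decomposition was struggling with. The remaining contributions are $\partial_s\mathcal{E}_s(X_s)$, which is non-positive since $W\ge0$, and $\frac{\sigma^2}{2}\Delta\mathcal{E}_s(X_s)$, which is bounded by $\frac{a\sigma^2}{2}\mathcal{E}_s(X_s)$ via Assumption~\ref{assu:existence}; Gronwall then yields $\mathbb{E}\,V(X_{t\wedge\tau_n})\le\mathbb{E}\,\mathcal{E}_{t\wedge\tau_n}(X_{t\wedge\tau_n})\le V(x_0)e^{at}$, and non-explosion follows from $V\to\infty$. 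You should replace your Lyapunov function $V$ by $\mathcal{E}_t$ and redo the non-explosion step accordingly; the rest of your outline then goes through.
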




\begin{proof}
Local existence and uniqueness of the solution to \eqref{eq:sid} is standard under the locally Lipschitz assumptions on the vector fields (see for instance~\cite[Theorem 13.1]{RW00}). 
We only need to prove here that $X$ 
does not explode in a finite time. Let us introduce the increasing sequence of stopping times $\tau_0 = 0$ and $$\tau_n :=
\inf\left\{t\geq \tau_{n-1}; \mathcal{E}_{t}(X_t) + \int_0^t \left\vert\nabla
\mathcal{E}_{s}(X_s)\right\vert^2 \mathrm{d}s > n\right\},$$ where $\mathcal{E}_{t}(X_t) := V(X_t) + \frac{1}{t} \int_0^t W(X_t-  X_s) \rmd s$. 
In order to show that the solution never explodes, we use the Lyapunov
functional $(x,t)\mapsto \mathcal{E}_t(x)$. As the process $(t,x)\mapsto\mathcal{E}_{t}(x)$ is of class $\mathcal{C}^2$ (in the space variable) and is a $\mathcal{C}^1$-semi-martingale
(in the time variable),
It\^o-Ventzell formula applied to
$(x,t)\mapsto\mathcal{E}_{t\wedge \tau_n}(x)$ implies
\begin{eqnarray}\label{eq:ito}
\mathcal{E}_{t\wedge \tau_n}(X_{t\wedge \tau_n}) = V(x_0)  + \int_0^{t\wedge \tau_n}
(\nabla \mathcal{E}_{s}(X_s), \mathrm{d}B_s) - \int_0^{t\wedge \tau_n} \left\vert\nabla \mathcal{E}_{s}(X_s)\right\vert^2 \mathrm{d}s\\
+ \frac{\sigma^2}{2}\int_0^{t\wedge \tau_n} \Delta \mathcal{E}_{s}(X_s)\mathrm{d}s
- \int_0^{t\wedge \tau_n}  \int_0^s  W(X_s-  X_u)\rmd u \frac{\mathrm{d}s}{s^2}.\notag
\end{eqnarray}
We note that $\int_0^{t\wedge \tau_n} (\nabla \mathcal{E}_{s}(X_s),
\mathrm{d}B_s)$ is a true martingale. 
By removing the negative terms and by using Assumption \ref{assu:existence}, we get the following bound on the expectation of the Lyapunov functional: 
\begin{equation*}
\mathbb{E}\mathcal{E}_{t\wedge \tau_n}(X_{t\wedge \tau_n})
\leq V(x_0) + a \int_0^{t}
\mathbb{E}\mathcal{E}_{s\wedge \tau_n}(X_{s\wedge \tau_n})
\mathrm{d}s.
\end{equation*}
So, Gronwall's Lemma leads to: $$\mathbb{E}V(X_{t\wedge \tau_n})\le \mathbb{E}\mathcal{E}_{t\wedge
\tau_n}(X_{t\wedge \tau_n}) \leq V(x_0)e^{at}.$$
Since, by Assumption \ref{assu:existence}, $\underset{\vert x\vert\rightarrow \infty}{\lim}V(x) =\infty$, the process $(X_t,t\geq 0)$ can not explode in a finite time, or else it leads to a contradiction with the inequality above. That proves existence of a unique global strong solution.
\end{proof}


\subsection{Convergence rate}
\label{ss:convrate}

In this section we show the long time behaviour for the empirical measure of the self-interacting diffusion in the convex landscape. This framework was considered in the paper of Kleptsyn and Kurtzmann~\cite{kk-ejp}. More precisely, they have proved the following

\begin{theorem}\cite[Theorem 1.6, Theorem 1.12, Proposition 2.5]{kk-ejp}
\label{t:main-2}
Let $X$ be the solution to the equation \eqref{eq:sid} with $\sigma=\sqrt{2}$. 
Suppose that $V$ and $W$ satisfy Assumptions~\ref{assu:regu}--\ref{assu:existence}. 
Then: 
\begin{enumerate}
    \item There exist $\alpha, C > 0$ such that for any $t \geq 0$ big enough: $\mu_t \in K_{\alpha, C}$ a.s., where
    \begin{eqnarray*}
        K_{\alpha,C} &:=& \{\mu \in \mcP(\Rn); \quad \forall R>0, \, \mu(\{y; |y|>R\})<Ce^{-\alpha R}\}.
    \end{eqnarray*}
    \item There exists a unique density $\rho_{\infty}:\Rn\to\bbR_+$, such that almost surely $$ \mu_t = \frac{1}{t}\int_0^t \delta_{X_s}\mathrm{d}s \xrightarrow[t\to+\infty]{*-weakly}
    \rho_{\infty}(x) \, \mathrm{d}x.
    $$
    \item There exists a constant $a>0$ such that almost surely, for $t$ large enough one has 
    \begin{equation*}
        \mathbb{W}_2(\mu_t, \rho_\infty) = O(\exp\{-a\sqrt[2k+1]{\log t}\})\,,
    \end{equation*}
    where $2k$ is the degree of the polynomial $P$ and $\mathbb{W}_2$ is the quadratic Wasserstein distance.
\end{enumerate}

\end{theorem}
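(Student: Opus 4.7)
The plan is to follow the dynamical-system-on-measures strategy developed in \cite{BLR,BR,kk-ejp}: identify the candidate limit as the unique fixed point of $\Pi_{\sqrt 2}$, reduce the almost sure convergence of $\mu_t$ to an asymptotic pseudotrajectory statement for an averaged flow on $\mcP(\Rn)$, and then quantify the speed via a Wasserstein Lyapunov functional.

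For item (i), I would revisit the It\^o--Ventzell computation of Theorem \ref{thm:existence-W1} with the functional $\mathcal{E}_t(x)=V(x)+W\ast\mu_t(x)$. Under uniform convexity (Assumption \ref{assu:growth2}), $\nabla\mathcal{E}_t$ admits a lower bound of the form $|\nabla\mathcal{E}_t(x)|^2\geq c|x|^2-C$, so the drift contribution in \eqref{eq:ito} turns into a genuine dissipation. A Gronwall argument then upgrades this into $\sup_{t\geq 0}\EE[\exp(\lambda V(X_t))]<\infty$ for some $\lambda>0$. Transferring this uniform exponential moment to the occupation measure (via Fubini, together with a suitable ergodic-type or Borel--Cantelli step) yields the almost sure exponential tail $\mu_t(\{|y|>R\})\leq Ce^{-\alpha R}$ defining $K_{\alpha,C}$.

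For item (ii), the natural candidate solves $\rho_\infty=\Pi_{\sqrt 2}(\rho_\infty)$. Uniform convexity of the effective potential $V+W\ast\mu$, combined with the Bakry--\'Emery criterion, makes $\Pi_{\sqrt 2}$ a strict contraction in $\mathbb{W}_2$, and Banach delivers a unique fixed point. For the dynamics one observes
\begin{equation*}
\frac{\rmd}{\rmd t}\mu_t=\frac{1}{t}\bigl(\delta_{X_t}-\mu_t\bigr),
\end{equation*}
and reads $(\mu_{e^s})_{s\geq 0}$ as an asymptotic pseudotrajectory, in the sense of Bena\"im, of the deterministic flow $\dot\nu=\Pi_{\sqrt 2}(\nu)-\nu$ on $\mcP(\Rn)$. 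Its $\omega$-limit set must be contained in the zeros of this vector field, which by contractivity reduce to $\{\rho_\infty\}$; almost sure weak-$\ast$ convergence follows.

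For item (iii), set $g(t):=\mathbb{W}_2^2(\mu_t,\rho_\infty)$. Differentiating in a suitable regularised sense and combining the contractivity of $\Pi_{\sqrt 2}$ with the ODE above, I expect a differential inequality of the shape
\begin{equation*}
\frac{\rmd g}{\rmd t}\leq -\frac{c}{t}\,g(t)+\frac{1}{t}\,R(t),
\end{equation*}
where $R(t)$ encodes the $L^2$-oscillation of $P(|X_t|)$ about its time average. The polynomial degree $2k$ from Assumption \ref{assu:growth1}, together with the $1/\sqrt t$ smoothing built into $\mu_t$, bounds $R(t)$ by a fractional power of $g(t)$ strictly less than one, producing a sub-linear Bernoulli-type ODE whose integration yields $g(t)=O\bigl(\exp(-a\sqrt[2k+1]{\log t})\bigr)$. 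The exponent $1/(2k+1)$ is then the algebraic balance between these two effects. The main obstacle is precisely here: closing the Wasserstein inequality requires moment estimates on $X_t$ that are uniform in $t$ and calibrated to the polynomial order $2k$, without which the stretched-log exponent cannot be reached, and it is there that the exponential tightness of item (i) becomes indispensable.
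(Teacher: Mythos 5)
Note first that Theorem~\ref{t:main-2} is a \emph{citation}, not a result proved in this paper: the authors import it verbatim from Kleptsyn--Kurtzmann \cite{kk-ejp} (Theorem~1.6, Theorem~1.12, Proposition~2.5) and the present paper contains no proof of it. So there is no ``paper's own proof'' against which to compare; what one can do is ask whether your reconstruction is a faithful sketch of the argument used in \cite{kk-ejp}. In broad outline it is: that paper indeed identifies $\rho_\infty$ as the fixed point of $\Pi_\sigma$, exploits the forward equation $\tfrac{\rmd}{\rmd t}\mu_t=\tfrac{1}{t}(\delta_{X_t}-\mu_t)$ to read $(\mu_{e^s})_s$ as an asymptotic pseudotrajectory of the averaged flow in the sense of Bena\"{\i}m, and derives the stretched-logarithmic rate from moment estimates calibrated to the polynomial degree $2k$ of Assumption~\ref{assu:growth1}. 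Your emphasis that item (i) is the engine making the rate in item (iii) close is also correct --- the whole point of the non-compact extension in \cite{kk-ejp}, compared to \cite{BLR,BR}, is the construction of a Lyapunov/tightness estimate that substitutes for compactness.

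Two points in your sketch are imprecise, however. First, Bakry--\'Emery by itself does not yield the $\mathbb{W}_2$-contraction of $\mu\mapsto\Pi_{\sqrt 2}(\mu)$: Bakry--\'Emery controls relaxation of a fixed Langevin semigroup, whereas what is needed here is a Lipschitz estimate of the Gibbs map $\mu\mapsto\Pi_\sigma(\mu)$ in $\mathbb{W}_2$. The latter is obtained by an optimal-transport (Caffarelli-type or synchronous-coupling) argument, and since $\nabla W$ is only polynomially bounded it requires exactly the uniform moment control from item (i) before it becomes a genuine contraction --- you cannot invoke Banach until that is in place. Second, the exponent $1/(2k+1)$ does not come out of a single Bernoulli-type ODE for $g(t)=\mathbb{W}_2^2(\mu_t,\rho_\infty)$; in \cite{kk-ejp} it is produced by an iterated bootstrap (showing that a given decay rate self-improves, and solving for the fixed point of that improvement), and the fluctuation term is not simply a fractional power of $g(t)$ but a separate martingale/bias contribution estimated via the moment bounds. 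Your heuristic captures the dimensional balance but not the actual mechanism.
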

Moreover, it was proved that, if $V$ is symmetric with respect to some point $q$, then the corresponding density $\rho_\infty$ is also symmetric with respect to the same point $q$. The authors showed that the density $\rho_{\infty}$ is the same limit density as in the result of~\cite{CMV}, uniquely defined by the following property: $\rho_{\infty}$ is a positive function, proportional to~$e^{-(V+W*\rho_{\infty})}$.

We note that the results of \cite{kk-ejp} were established for the case $\sigma = \sqrt{2}$. Nevertheless, one can check that each step of the proof can be reformulated with $\sigma$. Moreover, all the asymptotic (with respect to time $t$) results for small $\sigma$ can be upper bounded without loss of generality by the case of a constant $\sigma$, that one can take to be equal to $\sqrt{2}$. That means that for smaller $\sigma > 0$ we have faster convergence towards the invariant probability measure $\rho_\infty$.



We put together the observations above in the form of the following two results.






\begin{proposition}\label{prop:mu_t_K_alpha}
Under Assumptions~\ref{assu:regu}--\ref{assu:existence}, there exist $\alpha, C > 0$, such that for any $t \geq 0$ and for any $\sigma > 0$ small enough $\mu_t \in K_{\alpha, C}$ a.s. 
\end{proposition}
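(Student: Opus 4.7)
The plan is to combine the exponential tail bound of Theorem~\ref{t:main-2}(i) --- which is stated in \cite{kk-ejp} only for $\sigma=\sqrt{2}$ and for $t$ large enough --- with a uniform-in-$\sigma$ Lyapunov control of the positions $X_t$. Two points must be checked: first, that the argument of Kleptsyn and Kurtzmann goes through with constants independent of $\sigma\in(0,\sigma_0]$ for some fixed $\sigma_0>0$, and second, that the estimate extends from large times to all $t>0$.

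First I would apply It\^o's formula to the exponential Lyapunov function $\Phi_\lambda(x):=e^{\lambda V(x)}$ along the SDE~\eqref{eq:sid}. The resulting decomposition contains a true local-martingale increment, the coercive drift contribution $-\lambda|\nabla V(X_t)|^2\,\Phi_\lambda(X_t)$, a cross term involving $\nabla W\ast\mu_t$, and an It\^o correction of order $\sigma^2\lambda\bigl(\Delta V+\lambda|\nabla V|^2\bigr)\Phi_\lambda$. Using Assumption~\ref{assu:existence} (that is, $\Delta V\leq aV$ and $|\nabla V|^2/V\to\infty$), together with Young's inequality applied to the cross term and the polynomial growth of $\nabla W$ from Assumption~\ref{assu:growth1}, the coercive term $-\lambda|\nabla V|^2\Phi_\lambda$ dominates all the other contributions as soon as $\lambda$ and $\sigma_0$ are small enough; interlocking with this we only need a preliminary polynomial-moment bound for $\mu_t$, obtained beforehand from a weaker Lyapunov argument using $V$ itself. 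The outcome is a differential inequality
\[
\frac{d}{dt}\mathbb{E}\bigl[\Phi_\lambda(X_t)\bigr]\,\leq\,-c\,\mathbb{E}\bigl[\Phi_\lambda(X_t)\bigr]+C,
\]
with constants $c,C>0$ depending only on $V,W,\lambda,\sigma_0$. Gronwall's lemma delivers $\sup_{\sigma\leq\sigma_0}\sup_{t\geq 0}\mathbb{E}[\Phi_\lambda(X_t)]<\infty$, which by coercivity of $V$ gives $\sup_t\mathbb{E}[e^{\alpha_1|X_t|}]<\infty$ for some $\alpha_1>0$ uniform in $\sigma\leq\sigma_0$. A Fubini argument applied to $\mu_t=\tfrac{1}{t}\int_0^t\delta_{X_s}\,ds$ then yields the estimate in expectation: $\mathbb{E}[\mu_t(\{|y|>R\})]\leq Ce^{-\alpha_1 R}$ for every $t>0$ and every $R>0$.

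The main obstacle is the upgrade from an expectation bound to the almost-sure statement. Here I would follow the template of \cite{kk-ejp}: control the deviations of $\tfrac{1}{t}\int_0^t\mathbf{1}_{\{|X_s|>R\}}\,ds$ around its expectation by a Bernstein-type exponential martingale inequality, then use Borel--Cantelli along a countable family of radii $R\in\mathbb{N}$ and sampling times $t_k=k$, and interpolate via monotonicity in $R$ and continuity of $t\mapsto\mu_t(\{|y|>R\})$ on each interval $[t_k,t_{k+1}]$. This produces $\alpha,C>0$ and a deterministic $t_0$ such that $\mu_t\in K_{\alpha,C}$ for all $t\geq t_0$ almost surely, uniformly in $\sigma\leq\sigma_0$. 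The small-time range $t\in(0,t_0]$ is handled separately: on this fixed compact interval, the uniform exponential-moment bound on $(X_s)_{s\leq t_0}$ constrains the empirical measures $\mu_t$ to lie in some $K_{\alpha',C'}$; taking the worse of the two pairs of constants concludes.
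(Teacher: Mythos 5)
The paper actually provides no proof of Proposition~\ref{prop:mu_t_K_alpha}: it is presented as a direct consequence of Theorem~\ref{t:main-2}(i) together with the textual remark that the argument of~\cite{kk-ejp} (carried out there for $\sigma=\sqrt{2}$) goes through for general $\sigma$, with smaller $\sigma$ only improving the estimates. Your proposal therefore has more work to do than the paper's prose, and its strategy --- an exponential Lyapunov function $e^{\lambda V}$, an It\^o comparison in which the It\^o correction scales like $\sigma^2$, a Fubini step to transfer the bound to $\mu_t$, and a martingale-deviation upgrade to almost-sure membership --- is exactly the spirit of~\cite[Proposition 2.5]{kk-ejp}, now made uniform in $\sigma\le\sigma_0$.

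The concrete gap is in the final upgrade. You assert that on $(0,t_0]$ the uniform exponential-moment bound on $(X_s)_{s\le t_0}$ ``constrains the empirical measures $\mu_t$ to lie in some $K_{\alpha',C'}$.'' That step does not follow: $\sup_t\mathbb{E}\bigl[e^{\alpha_1|X_t|}\bigr]<\infty$ bounds $\mathbb{E}\bigl[\mu_t(\{|y|>R\})\bigr]$ exponentially in $R$, but it cannot give a pathwise, almost-sure tail bound with an $\omega$-independent constant --- for each fixed $t>0$ and each $R$ large, the Brownian driver carries the path past radius $R$ for a set of times of positive Lebesgue measure with positive probability, so the event $\{\mu_t(\{|y|>R\})\ge Ce^{-\alpha R}\}$ has positive probability for any fixed $C$. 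Likewise, Borel--Cantelli along $t_k=k$ delivers an $\omega$-dependent threshold, not the deterministic $t_0$ you claim. The statement one can actually reach --- and the one~\cite{kk-ejp} proves, and what the rest of the paper genuinely needs --- is the asymptotic one: almost surely, $\mu_t\in K_{\alpha,C}$ for all $t$ beyond a finite \emph{random} time, with $\alpha,C$ uniform in $\sigma\le\sigma_0$. A secondary omission concerns the bootstrapping of the cross term $(\nabla V,\nabla W\ast\mu_t)$: the ``weaker Lyapunov argument using $V$'' produces $\mathbb{E}V(X_t)\le V(x_0)e^{at}$, which grows in $t$ and does not by itself supply the time-uniform polynomial-moment bound your Young-inequality step requires; one must retain the coercive $-|\nabla\mathcal{E}_s|^2$ term (discarded in the existence proof) and exploit $|\nabla V|^2/V\to\infty$ to close that loop, which is precisely the delicate content of the estimates in~\cite{kk-ejp}.
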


\begin{proposition}
\label{thm:KK12}
Under Assumptions~\ref{assu:regu}--\ref{assu:existence}, there exists a constant $a>0$ such that for any $x\in\bRb^n$, almost surely, the following asymptotics holds:
    \begin{equation*}
    \mathbb{W}_{2k}(\mu_t, \rho_\infty)=O\left(\exp\{-a\left(\log t\right)^{\frac{1}{2k+1}}\}\right)\,,
    \end{equation*}
where $\rho_\infty$ is the unique probability measure such that $\rho_\infty = \Pi_{\sigma}(\rho_\infty)$.
\end{proposition}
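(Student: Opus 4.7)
The plan is to derive this essentially as a consequence of Theorem~\ref{t:main-2}, adapting the statement on three fronts: replacing $\sigma=\sqrt{2}$ by arbitrary small $\sigma>0$, upgrading from $\mathbb{W}_{2}$ to $\mathbb{W}_{2k}$, and identifying the limit density $\rho_\infty$ as the unique fixed point of $\Pi_\sigma$. Roughly speaking, Theorem~\ref{t:main-2} furnishes the $\mathbb{W}_2$-rate together with the $K_{\alpha,C}$-tail estimate, and one turns this into a $\mathbb{W}_{2k}$-rate by a standard Hölder interpolation using uniform exponential moments.

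First, I would revisit the argument of~\cite{kk-ejp} and check that each step carries the diffusion coefficient $\sigma$ through as a harmless parameter: the invariant density becomes $\rho_\infty=\Pi_\sigma(\rho_\infty)$, the key gradient-flow / entropy estimates acquire factors of $\sigma$ that can be absorbed, and a Bakry–Émery/Pinsker argument yields the same stretched-exponential rate, with constants improving as $\sigma$ decreases. More precisely, for $\sigma$ in a bounded range one can dominate the $\mathbb{W}_2$-rate by that obtained at $\sigma=\sqrt{2}$, which is already $O(\exp\{-a(\log t)^{1/(2k+1)}\})$ almost surely. Uniqueness of $\rho_\infty$ as the fixed point of $\Pi_\sigma$ follows from the strict convexity of $V+W\ast\nu$ (Assumption~\ref{assu:growth2}), exactly as in the CMV characterization cited right after Theorem~\ref{t:main-2}.

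Next, to pass from order $2$ to order $2k$ I would exploit uniform tail control. By Proposition~\ref{prop:mu_t_K_alpha}, $\mu_t\in K_{\alpha,C}$ almost surely for $t$ large; a similar exponential tail for $\rho_\infty$ follows directly from its representation $\rho_\infty\propto\exp\{-\tfrac{2}{\sigma^2}(V+W\ast\rho_\infty)\}$ together with the uniform convexity of $V$. Consequently both measures have a finite $2r$-th moment bounded by a constant $M$ independent of $t$, for any $r>k$ we pick. Taking the optimal $\mathbb{W}_2$-coupling $\pi$ of $(\mu_t,\rho_\infty)$ and writing $|x-y|^{2k}=|x-y|^{2\theta}|x-y|^{2r(1-\theta)}$ with $\theta=(r-k)/(r-1)$, Hölder's inequality yields
\[
\mathbb{W}_{2k}^{2k}(\mu_t,\rho_\infty)\le\Bigl(\int|x-y|^2\rmd\pi\Bigr)^{\theta}\Bigl(\int|x-y|^{2r}\rmd\pi\Bigr)^{1-\theta}\le C\,\mathbb{W}_2^{2\theta}(\mu_t,\rho_\infty),
\]
the last integral being bounded via $|x-y|^{2r}\le 2^{2r-1}(|x|^{2r}+|y|^{2r})$ and the uniform moment bound. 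Raising to the power $1/(2k)$ gives $\mathbb{W}_{2k}\le C'\mathbb{W}_2^{\theta/k}$, and composing with the rate of Theorem~\ref{t:main-2} produces the stretched-exponential decay $O(\exp\{-a'(\log t)^{1/(2k+1)}\})$ with a smaller but still positive constant $a'$, which may be renamed $a$.

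The main technical obstacle is not the interpolation step, which is routine, but the honest verification that the chain of estimates in~\cite{kk-ejp} — in particular the Lyapunov-type control of high polynomial moments and the exponential-tail bound underlying $\mu_t\in K_{\alpha,C}$ — remains valid uniformly for all small $\sigma>0$, rather than only at $\sigma=\sqrt{2}$. Once this \emph{uniformity in $\sigma$} is secured, the rest is a bookkeeping exercise combining Theorem~\ref{t:main-2}, Proposition~\ref{prop:mu_t_K_alpha}, and the Hölder interpolation described above.
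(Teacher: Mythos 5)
Your proposal is correct and follows essentially the same route as the paper, which does not give an explicit proof of Proposition~\ref{thm:KK12} but rather presents it as a direct consequence of Theorem~\ref{t:main-2} (Kleptsyn--Kurtzmann) together with the observation that the arguments of~\cite{kk-ejp} carry over from $\sigma=\sqrt{2}$ to arbitrary small $\sigma>0$, with bounds that can only improve. The one genuine addition in your write-up is the H\"older interpolation step: Theorem~\ref{t:main-2}~iii) gives a rate in $\mathbb{W}_2$, whereas the Proposition is stated in $\mathbb{W}_{2k}$, and you correctly bridge this gap via $\mathbb{W}_{2k}^{2k}\le\mathbb{W}_2^{2\theta}\cdot(\text{uniform }2r\text{-th moments})^{1-\theta}$, using the exponential-tail control of Proposition~\ref{prop:mu_t_K_alpha} for $\mu_t$ and the explicit Gibbs form of $\rho_\infty$ for the other marginal --- a detail the paper leaves implicit. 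One small inaccuracy: \cite{kk-ejp} does not proceed via Bakry--\'Emery or Pinsker-type functional inequalities (the stretched-exponential rate there comes from a deterministic-flow comparison and a bootstrap), but this misattribution has no bearing on the logic of your argument. As you yourself flag, the main substantive content --- verifying that every estimate in \cite{kk-ejp} holds uniformly for $\sigma\in(0,1)$ --- is asserted rather than carried out both in your proposal and in the paper.
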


We stress that the convergence rate that we establish in Proposition~\ref{thm:KK12} does not depend on $\sigma$.

\subsection{Main result on exit-problem}
\label{ss:ep}

The main goal of this paper consists in finding some precise upper and lower bounds for the exit-time from some positively invariant domain.

\begin{theorem}
\label{dell}
We assume that the potentials $V$, $W$ and an open domain $\cDc$ satisfy the Assumptions~\ref{assu:regu}--\ref{assu:domain}. By $\tau:=\inf\left\{t\geq0\,\,:\,\,X_t\notin\cDc\right\}$, we denote the first time the process $X$ exits the domain $\mathcal{D}$. We introduce the so-called exit-cost:
\begin{equation}
\label{verdict}
H:=\inf_{x\in\partial\mathcal{D}}\left(V(x)+W(x-m)-V(m)\right)\,.
\end{equation}
Then $\displaystyle\mathbb{P}-\lim_{\sigma\to0}\frac{\sigma^2}{2}\log(\tau)=H$ that is for any $\delta>0$, we have
\begin{equation}
\label{eq:dell}
\lim_{\sigma\to0}\mathbb{P}\left(\exp\left\{\frac{2}{\sigma^2}\left(H-\delta\right)\right\}\leq\tau\leq\exp\left\{\frac{2}{\sigma^2}\left(H+\delta\right)\right\}\right)=1\,.
\end{equation}
\end{theorem}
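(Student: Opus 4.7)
The plan is to reduce the exit-problem for \eqref{eq:sid} to the classical Kramers' exit-problem for the time-homogeneous Langevin diffusion
\begin{equation*}
\rmd Y_t^\sigma = \sigma\,\rmd B_t - \nabla V(Y_t^\sigma)\,\rmd t - \nabla W(Y_t^\sigma-m)\,\rmd t,
\end{equation*}
driven by the same Brownian motion as $X$. By Assumption~\ref{assu:growth2} the confining potential $U(x):=V(x)+W(x-m)$ is $(\rho+\alpha)$-convex with unique minimiser $m$, and by Assumption~\ref{assu:domain}(ii)--(iii) the set $\cDc$ is a genuine stable domain of attraction for the flow of $-\nabla U$. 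Classical Freidlin--Wentzell theory for $Y^\sigma$ therefore yields a Kramers' law with exit-cost $\inf_{x\in\partial\cDc}(U(x)-U(m))$, which matches $H$ in \eqref{verdict} up to the normalisation $W(0)=0$ that we can impose without loss of generality.

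The bridge between $X$ and $Y$ is the \emph{stabilisation of the empirical measure} $\mu_t$ around $\delta_m$. Combining Propositions~\ref{prop:mu_t_K_alpha} and~\ref{thm:KK12} with a Laplace computation on the fixed-point relation $\rho_\infty=\Pi_\sigma(\rho_\infty)$ (using the strict convexity of $V+W\ast\rho_\infty$ to show that $\rho_\infty$ concentrates on $m$ at rate $\sigma$), one obtains that for any $\eps>0$ there exists a deterministic time $T_0=T_0(\eps,\sigma)$ negligible in front of $e^{2H/\sigma^2}$ such that $\mathbb{W}_{2k}(\mu_t,\delta_m)\leq\eps$ for all $t\geq T_0$ with probability tending to one as $\sigma\to 0$. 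In particular, stabilisation happens long before the diffusion has any realistic chance of exiting.

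The technical heart is then a synchronous coupling between $X$ and a copy $Y$ of the Langevin diffusion started from $Y_{T_0}=X_{T_0}$. Writing $Z_t:=X_t-Y_t$ for $t\geq T_0$, Assumption~\ref{assu:growth2} gives the $\sigma$-free contraction
\begin{equation*}
\langle Z_t,\nabla U(X_t)-\nabla U(Y_t)\rangle \geq (\rho+\alpha)|Z_t|^2,
\end{equation*}
while the polynomial bound on $\nabla^2 W$ (Assumption~\ref{assu:growth1}) combined with the exponential tails of $\mu_t$ (Proposition~\ref{prop:mu_t_K_alpha}) yields
\begin{equation*}
\bigl|\nabla W\ast\mu_t(x)-\nabla W(x-m)\bigr|\leq C\bigl(1+|x|^{k-1}\bigr)\mathbb{W}_{2k}(\mu_t,\delta_m).
\end{equation*}
Gronwall applied to $|Z_t|^2$ thus shows $\sup_{s\geq T_0}|X_s-Y_s|=O(\eps)$ uniformly on the Kramers' window. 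Comparing the first exit-time of $X$ from $\cDc$ with those of $Y$ from slightly inflated and deflated versions $\cDc^{\pm\eta}$ of $\cDc$, and letting $\eta\to 0$ using the continuity of the map $\eta\mapsto\inf_{\partial\cDc^\eta}(U-U(m))$, both inclusions in \eqref{eq:dell} follow from the Kramers' law for $Y$.

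The main obstacle is the circularity between the coupling and the stabilisation: keeping $\mu_t$ close to $\delta_m$ requires $X_s$ to remain close to $m$ on $[T_0,t]$, which relies on the coupling holding, which itself uses $\mu_t\approx\delta_m$. I would break the circle by a bootstrap on the event $\{|X_s-Y_s|\leq\eta\text{ for all }s\in[T_0,t\wedge\tau]\}$: the $\sigma$-free contraction $\rho+\alpha$ makes Gronwall self-consistent, and standard Gaussian tail bounds on the Brownian perturbation show that the probability of the complement is subexponential in $1/\sigma^2$, hence negligible on the relevant scale. A secondary technical point is that the pre-stabilisation contribution to $\mu_t$ carries weight only $T_0/t$, exponentially small on the Kramers' window, so it cannot contaminate the coupling estimate.
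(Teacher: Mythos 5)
Your proposal follows essentially the same architecture as the paper's proof: a $\sigma$-uniform stabilisation time for $\mu_t$ near $\delta_m$ (Propositions~\ref{prop:mu_t_K_alpha},~\ref{thm:KK12},~\ref{prop:Tkappa:uniform}), a synchronous coupling between $X$ and the frozen diffusion $Y$ with the $(\rho+\alpha)$-contraction from Assumption~\ref{assu:growth2} (Proposition~\ref{prop:coupling}), and Freidlin--Wentzell for $Y$ applied to the inflated and contracted level sets $\cDc^e$, $\cDc^c$, exactly as in Section~\ref{ss:proofet}. One caveat on your self-criticism: the circularity you worry about does not actually arise here, because the stabilisation of $\mu_t$ around $\delta_m$ is a global ergodic consequence of the uniform convexity of $V$ and $W$ (the process is confining on all of $\bRb^d$), and therefore holds unconditionally, whether or not $X$ has left $\cDc$; no bootstrap on $\{|X_s-Y_s|\le\eta\}$ is needed. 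Relatedly, the probability of the bad set where the coupling/stabilisation fails is not subexponential in $1/\sigma^2$ at fixed accuracy: it is of order $\kappa^k$ uniformly in small $\sigma$, and the conclusion~\eqref{eq:dell} is reached by sending $\kappa\to 0$ (which in turn forces $\sigma_\kappa\to 0$), not by beating the exponential scale at fixed $\kappa$.
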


This statement about the exit-time corresponds to what we denote as the Kramers' type law.

From Theorem \ref{dell}, we immediately obtain the classical statement on the exit-location.

\begin{cor}
\label{hphp}
Under the same assumptions as the ones of Theorem \ref{dell}, if $\mathcal{N}$ is a subset of $\partial\mathcal{D}$ such that $\displaystyle\inf_{z\in\mathcal{N}}\left(V(z)+W(z-m)-V(m)\right)>H$, then
\begin{equation}
\label{eq:hphp}
\lim_{\sigma\to0}\mathbb{P}\left(X_{\tau}\in\mathcal{N}\right)=0\,.
\end{equation}
\end{cor}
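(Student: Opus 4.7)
The plan is to combine Theorem \ref{dell} applied to $\mathcal{D}$ with Theorem \ref{dell} applied to a carefully chosen sublevel set of the pseudo-potential $\Phi(x) := V(x) + W(x-m) - V(m)$. Set $H' := \inf_{z\in\mathcal{N}} \Phi(z)$, which by hypothesis satisfies $H' > H$, and fix $\delta \in (0, (H'-H)/4)$. By continuity of $\Phi$, I would pick an open neighborhood $\mathcal{U} \subset \mathbb{R}^d$ of $\mathcal{N}$ on which $\Phi > H + 3\delta$, and introduce the hitting time $\tau_{\mathcal{U}} := \inf\{t \geq 0 : X_t \in \mathcal{U}\}$. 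By path continuity, $\{X_\tau \in \mathcal{N}\} \subset \{\tau_{\mathcal{U}} \leq \tau\}$. Theorem \ref{dell} applied to $\mathcal{D}$ gives $\tau \leq T_1 := \exp\{2(H+\delta)/\sigma^2\}$ with probability tending to $1$, so it is enough to show $\mathbb{P}(\tau_\mathcal{U} \leq T_1) \to 0$.

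For the complementary lower bound, I would introduce the auxiliary domain $\mathcal{D}_c := \{\Phi < c\}$ with $c := H + 2\delta$. The strong convexity from Assumption \ref{assu:growth2} makes $\Phi$ strictly convex with unique minimum $0$ at $m$, so $\mathcal{D}_c$ is a bounded open convex set containing $m$, with $\partial \mathcal{D}_c = \{\Phi = c\}$ and exit cost exactly $c$. Moreover $\mathcal{D}_c \cap \mathcal{U} = \emptyset$ since $\Phi > H + 3\delta > c$ on $\mathcal{U}$. The positive invariance of $\mathcal{D}_c$ under $x \mapsto -\nabla V(x) - \nabla W(x-m)$ and the convergence $\rho_t(x) \to m$ from any $x \in \partial \mathcal{D}_c$ both follow from $\Phi$ being a strict Lyapunov function for this flow. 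Once Assumption \ref{assu:domain} is verified for $\mathcal{D}_c$, Theorem \ref{dell} gives $\tilde\tau := \inf\{t : X_t \notin \mathcal{D}_c\} \geq T_2 := \exp\{2(H + 3\delta/2)/\sigma^2\}$ with probability tending to $1$. Since $\mathcal{U} \subset \mathcal{D}_c^{c}$, the process cannot reach $\mathcal{U}$ without first leaving $\mathcal{D}_c$, so $\tau_\mathcal{U} \geq \tilde\tau$; combined with the upper bound on $\tau$, we obtain $\tau < \tau_\mathcal{U}$ on an event of probability tending to $1$, contradicting $\{X_\tau \in \mathcal{N}\}$ and yielding $\mathbb{P}(X_\tau \in \mathcal{N}) \to 0$.

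The main obstacle I anticipate is checking Assumption \ref{assu:domain}(i) for $\mathcal{D}_c$: the deterministic interaction flow $\varphi_t$ starting from $x_0$ need not lie in $\mathcal{D}_c$ at time $0$, because $x_0$ may have $\Phi(x_0) \geq c$. The natural cure is to apply the strong Markov property at the first entry time of $X$ into $\mathcal{D}_c$, which by Assumption \ref{assu:domain}(i) applied to $\mathcal{D}$ occurs within a bounded, $\sigma$-independent time with high probability. This restart is delicate because the SDE \eqref{eq:sid} is path-dependent through $\mu_t$, so the shifted process is not a fresh self-interacting diffusion; one must argue that, after this short initial transient, the empirical measure is close enough to $\delta_m$ (as in Proposition \ref{thm:KK12}) for Kramers' lower bound to transfer to the shifted process. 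All the other ingredients, in particular the contradiction argument combining the two applications of Theorem \ref{dell}, are then purely bookkeeping.
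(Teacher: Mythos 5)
Your approach is essentially the same as the paper's: you compare the exit from $\mathcal{D}$ with the exit from a higher sublevel set of the pseudo-potential $\Phi(x) = V(x) + W(x-m) - V(m)$, applying Theorem~\ref{dell} to both and splitting on an intermediate deterministic time scale. The paper takes $\mathcal{K}_{H+2\xi} := \{\Phi < H+2\xi\}$ where $\inf_{\mathcal{N}}\Phi = H + 3\xi$, uses $\mathcal{N}\subset\mathcal{K}_{H+2\xi}^c$ to get $\{X_\tau\in\mathcal{N}\}\subset\{\tau_\xi\leq\tau\}$, and then bounds $\mathbb{P}\{\tau_\xi\leq\tau\}$ by a union of two vanishing probabilities. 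Your version introduces an auxiliary open neighbourhood $\mathcal{U}$ and the hitting time $\tau_{\mathcal{U}}$, but this is superfluous: since $\mathcal{N}\subset\mathcal{D}_c^c$, path continuity already gives $\{X_\tau\in\mathcal{N}\}\subset\{\tilde\tau\leq\tau\}$ without the intermediate set. The bookkeeping with $T_1$, $T_2$ and the contradiction is exactly the paper's argument.

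The obstacle you flag about Assumption~\ref{assu:domain}(i) for the sublevel set is a genuine one, and the paper itself does not address it: the proof simply asserts that Theorem~\ref{dell} can be applied to level sets of $W_m$ without checking that the self-interacting flow $\varphi_t$ (which is \emph{not} the gradient flow of $\Phi$) remains inside $\mathcal{K}_{H+2\xi}$. Items (ii) and (iii) of Assumption~\ref{assu:domain} are automatic for sublevel sets by strong convexity of $\Phi$, but item (i) is not. However, the cure you sketch --- restarting via the strong Markov property at the first entry time into $\mathcal{D}_c$ --- is not really viable here, and for precisely the reason you yourself name: the SDE~\eqref{eq:sid} is genuinely path-dependent through $\mu_t$, so the shifted process is not a self-interacting diffusion of the same form and the machinery (stabilisation time $T_\kappa(\sigma)$, Propositions~\ref{prop:apple} and~\ref{prop:coupling}) does not transfer directly after a random translation of time. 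Raising the issue is valuable, but the proposed patch is not a proof; to make this rigorous one would rather want to run the intermediate lemmas for a domain that contains the trajectory of $\varphi$ by construction (e.g.\ a suitable enlargement of $\mathcal{D}_c$) or record an explicit hypothesis guaranteeing $\varphi_t\in\mathcal{D}_c$ for all $t$.
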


This means that the diffusion avoids to exit from a part of the boundary where the cost of exiting exceeds the exit-cost of $\cDc$.

\section{Exit-problem}
\label{s:exitproblem}

%
%

In this section, we prove our main result. First, we give the necessary intermediate results in Section~\ref{ss:intermediate}. More precisely, we show that there exists a time of stabilisation around $\delta_m$ for the occupation measure, in terms of Wasserstein distance. Then, we show that the process $X$ solution to \eqref{eq:sid} is close to the solution of the deterministic flow $(\varphi_t)_{t\ge 0}$. Using that, we prove in Corollary~\ref{cor:apple} that the probability of leaving a positively invariant domain before the occupation measure remains stuck in the ball of center $\delta_m$ and radius $\kappa$ for $\mathbb{W}_{2k}$ vanishes as $\sigma$ goes to zero. Then, we consider the coupling between the studied diffusion and the one where the occupation measure is frozen to $\delta_m$ and we show that these diffusions are close. 

After, we provide the proof of Theorem~\ref{dell} then we give the proofs of the intermediate results. Finally, we apply Theorem~\ref{dell} to level sets so that we are in position to prove the exit-location result in Section~\ref{ss:proofel}.

\subsection{Intermediate results}
\label{ss:intermediate}

We first introduce a deterministic time, representing the time of stabilisation of the occupation measure, if it occurs, around its supposed limit $\delta_m$: 

\begin{defn}
\label{def:def:Tkappa}
For any $\sigma>0$ and for any $\kappa>0$, we introduce:
\begin{equation}
\label{def:Tkappa}
T_\kappa(\sigma):=\inf\Big\{t_0\geq0\,\,:\,\,\forall t\geq t_0,\,\mathbb{E}\left(\mathbb{W}_{2k}\left(\mu_t; \delta_m\right)\right)\leq\kappa\Big\}\,,
\end{equation}
where we remind that $2k$ is the degree of the polynomial function $P$, introduced in Assumption~\ref{assu:growth1}.
\end{defn}

\begin{prop}
\label{prop:Tkappa:uniform}
For any $\sigma,\kappa>0$, the time $T_\kappa(\sigma)$ is finite. Moreover, for any~$\kappa>0$, there exists $T_\kappa>0$ such that
\begin{equation*}
\sup_{0<\sigma<1}T_\kappa(\sigma)\leq T_\kappa\,.
\end{equation*}
\end{prop}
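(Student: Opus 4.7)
The plan is to bound $\mathbb{E}(\mathbb{W}_{2k}(\mu_t,\delta_m))$ via the triangle inequality with $\rho_\infty$ as intermediary, using Proposition~\ref{thm:KK12} for the long-time convergence (with its $\sigma$-independent rate) and using the small-noise concentration of $\rho_\infty$ at $\delta_m$ for the residual piece.

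More precisely, I would write $\mathbb{E}(\mathbb{W}_{2k}(\mu_t,\delta_m))\leq\mathbb{E}(\mathbb{W}_{2k}(\mu_t,\rho_\infty))+\mathbb{W}_{2k}(\rho_\infty,\delta_m)$. For the first summand, I would lift the almost-sure rate $O(\exp\{-a(\log t)^{1/(2k+1)}\})$ of Proposition~\ref{thm:KK12} to an expectation rate. Proposition~\ref{prop:mu_t_K_alpha} gives $\mu_t\in K_{\alpha,C}$ almost surely and uniformly in $\sigma\in(0,1)$, and since $\rho_\infty=\Pi_\sigma(\rho_\infty)$ inherits the same uniform exponential tails, one has a deterministic almost-sure envelope on $\mathbb{W}_{2k}(\mu_t,\rho_\infty)$; a dominated-convergence argument then yields an expectation rate that remains independent of $\sigma$.

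For the second summand, I would exploit Assumption~\ref{assu:growth2}. The fixed-point density $\rho_\infty\propto\exp\{-2(V+W\ast\rho_\infty)/\sigma^2\}$ is log-concave with Hessian of $-\log\rho_\infty$ bounded below by $(2\rho/\sigma^2)\mathrm{Id}$, so a Brascamp--Lieb-type moment estimate yields $\int|x-m_\sigma|^{2k}\rho_\infty(\mathrm{d}x)\leq C_k\sigma^{2k}$, where $m_\sigma$ denotes the unique minimiser of $V+W\ast\rho_\infty$. Since $\nabla W(0)=0$, the critical-point equation $\nabla V(m_\sigma)+\nabla W\ast\rho_\infty(m_\sigma)=0$ forces $m_\sigma\to m$ as $\sigma\to 0$, and hence $\mathbb{W}_{2k}(\rho_\infty,\delta_m)\to 0$.

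Combining these ingredients: for fixed $\kappa>0$, take $\sigma$ small enough that $\mathbb{W}_{2k}(\rho_\infty,\delta_m)\leq\kappa/2$, and $T_\kappa$ large enough that the $\sigma$-uniform expectation rate from the first summand is at most $\kappa/2$ for all $t\geq T_\kappa$; this provides both finiteness of $T_\kappa(\sigma)$ and the uniform bound $\sup_{0<\sigma<1}T_\kappa(\sigma)\leq T_\kappa$. The main obstacle I anticipate is precisely the lifting of the almost-sure rate to an expectation rate uniformly in $\sigma$: one needs to revisit the proof in~\cite{kk-ejp} to check that the random prefactor in the $O$-notation has a moment uniformly bounded in $\sigma$, with the uniform $K_{\alpha,C}$-control of Proposition~\ref{prop:mu_t_K_alpha} being the key tool.
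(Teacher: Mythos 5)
Your decomposition is exactly the one the paper uses: write $\mathbb{E}(\mathbb{W}_{2k}(\mu_t,\delta_m))\leq\mathbb{E}(\mathbb{W}_{2k}(\mu_t,\rho_\infty))+\mathbb{W}_{2k}(\rho_\infty,\delta_m)$ with $\rho_\infty$ the fixed point of $\Pi_\sigma$, control the first term by the $\sigma$-independent rate of Proposition~\ref{thm:KK12} (justifying integrability via the uniform $K_{\alpha,C}$ control of Proposition~\ref{prop:mu_t_K_alpha}), and send the second term to zero as $\sigma\to0$; the ``main obstacle'' you flag --- upgrading the almost-sure rate to an expectation rate uniformly in $\sigma$ --- is exactly the point the paper addresses, rather tersely, by appealing to the uniform $K_{\alpha,C}$ envelope. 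The one genuine difference is the treatment of the second summand. You propose a self-contained Brascamp--Lieb argument: $\rho_\infty\propto e^{-2(V+W\ast\rho_\infty)/\sigma^2}$ is log-concave with $\nabla^2(-\log\rho_\infty)\geq(2(\rho+\alpha)/\sigma^2)\,\mathrm{Id}$ under Assumption~\ref{assu:growth2}, so $\int|x-m_\sigma|^{2k}\rho_\infty(\mathrm{d}x)\leq C_k\sigma^{2k}$, and the critical-point equation together with $\nabla W(0)=0$ forces $m_\sigma\to m$. The paper instead invokes a result of Herrmann and Tugaut~\cite{HT-EJP} on the small-noise weak-$\ast$ limit of invariant measures of self-stabilizing diffusions, activated by the uniform moment bound from $K_{\alpha,C}$. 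Your route is more elementary and internally contained, and in fact delivers the quantitative estimate $\mathbb{W}_{2k}(\rho_\infty,\delta_m)=O(\sigma)$ rather than mere convergence; the paper's route is shorter at the cost of an external citation. Either way, one concludes by picking $\sigma$ small enough to make the second summand below $\kappa/2$ and $T_\kappa$ large enough (independently of $\sigma<1$) to make the first summand below $\kappa/2$, which gives both finiteness of $T_\kappa(\sigma)$ and the uniform bound.
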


The proof of Proposition~\ref{prop:Tkappa:uniform} is postponed to Section~\ref{proof:Tkappa:uniform}.

\medskip

Next, we show that the probability for the process $X$ to exit from $\cDc$ before the time $T_\kappa(\sigma)$ tends to $0$ as $\sigma$ goes to $0$.

We remind the reader that in this work, the noise vanishes. Consequently, it is natural to introduce the deterministic flow $(\varphi_t)_{t \geq 0}$ defined by the following zero-noise process
\begin{equation}
\dot{\varphi}_t = -\nabla V(\varphi_t) -\frac{1}{t}\int_0^t \nabla W(\varphi_t-\varphi_s) \rmd s, \quad \quad \varphi_0 = x_0.
\end{equation}

We will state that for any $T>0$, $(X_t,0\leq t\leq T)$ and $(\varphi_t,0\leq t\leq T)$ are uniformly close while the noise goes to zero. Namely,

\begin{proposition} 
\label{prop:apple}
We assume that the potentials $V$, $W$ and an open domain $\cDc$ satisfy the Assumptions~\ref{assu:regu}--\ref{assu:domain}. Then, for any~$\xi>0$ and for any~$T>0$, we have:
\begin{equation}
\label{apple}
\lim_{\sigma\to 0}\mathbb{P}\left(\sup_{t\in [0;T]}\left|X_t-\varphi_t(x_0)\right|^2>\xi\right)=0\,.
\end{equation}
\end{proposition}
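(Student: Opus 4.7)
The plan is to compare $X_t$ and $\varphi_t$ via a Gronwall argument on the difference $\Delta_t := X_t - \varphi_t$, after localising to an event on which both trajectories stay in a compact ball (since $\nabla V$ and $\nabla W$ are only locally Lipschitz under Assumption~\ref{assu:growth1}). Subtracting the integral form of~\eqref{eq:sid} (with $X_0 = x_0$) from that of $\varphi$, I have
\[
\Delta_t = \sigma B_t - \int_0^t \bigl[\nabla V(X_s) - \nabla V(\varphi_s)\bigr]\,\mathrm{d}s - \int_0^t \frac{1}{s}\int_0^s \bigl[\nabla W(X_s - X_u) - \nabla W(\varphi_s - \varphi_u)\bigr]\,\mathrm{d}u\,\mathrm{d}s.
\]

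First, I would fix $R_0$ with $\sup_{t \leq T}|\varphi_t| \leq R_0$, finite thanks to Assumption~\ref{assu:domain}(i) and continuity, and introduce the stopping time $\tau_R := \inf\{t \geq 0 : |X_t| \geq R\}$ for $R \geq R_0$. On the event $\{t < \tau_R\}$, all quantities appearing in the drift-difference above lie in the ball $\overline{B}(0,2R)$, on which, by Assumption~\ref{assu:growth1}, both $\nabla V$ and $\nabla W$ are Lipschitz with some common constant $L_R \leq \sup_{|y|\leq 2R} P(|y|)$. Setting $f(t) := \sup_{s \leq t \wedge \tau_R}|\Delta_s|$ and using $|\Delta_s - \Delta_u| \leq |\Delta_s| + |\Delta_u| \leq 2 f(s)$ in the $W$-term, I obtain
\[
f(T) \leq \sigma \sup_{s \leq T}|B_s| + 3 L_R \int_0^T f(s)\,\mathrm{d}s,
\]
and Gronwall's lemma yields $f(T) \leq \sigma\, e^{3 L_R T} \sup_{s \leq T}|B_s|$.

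To conclude, I would split
\[
\mathbb{P}\bigl(\sup_{t \leq T}|\Delta_t|^2 > \xi\bigr) \leq \mathbb{P}(\tau_R \leq T) + \mathbb{P}\bigl(\sigma^2 e^{6 L_R T}\sup_{s \leq T}|B_s|^2 > \xi\bigr),
\]
and, for any prescribed $\eta > 0$, choose first $R$ large enough that the first term is $\leq \eta/2$ uniformly in $\sigma \in (0,1]$, then $\sigma$ small enough that the second term is $\leq \eta/2$; letting $\eta \to 0$ gives~\eqref{apple}.

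The main technical obstacle is the uniform-in-$\sigma$ tightness estimate needed in the last step, namely $\sup_{\sigma \in (0,1]} \mathbb{P}(\tau_R \leq T) \to 0$ as $R \to \infty$. This is obtained by revisiting the Lyapunov computation in the proof of Theorem~\ref{thm:existence-W1}: for $\sigma \leq 1$ the prefactor $\sigma^2/2$ multiplying $\Delta \mathcal{E}_s(X_s)$ in the It\^o--Ventzell formula is bounded by $1/2$, so Assumption~\ref{assu:existence} yields a Gronwall bound on $\mathbb{E}\mathcal{E}_T(X_T)$ with a constant independent of $\sigma$. Markov's inequality combined with the coercivity $V(x)\to\infty$ as $|x|\to\infty$ then gives $\sup_{\sigma \in (0,1]} \mathbb{P}(\sup_{t\leq T} V(X_t) \geq M) \to 0$ as $M \to \infty$, which provides exactly the required tightness of $\tau_R$.
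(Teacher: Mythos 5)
Your proof takes a genuinely different route from the paper's. The paper localises via the \emph{proximity} stopping time $\mathcal{T}:=\inf\{t:|X_t-\varphi_t|^2\ge\xi\}$, which automatically confines $X$ to a compact neighbourhood $K'$ of the orbit $(\varphi_t)_{t\ge0}$ (bounded thanks to Assumption~\ref{assu:domain}(i)); it then applies It\^o to $|X_t-\varphi_t|^2$, controls the stochastic integral by Burkholder--Davis--Gundy, and obtains $\mathbb{E}\sup_{s\le T\wedge\mathcal{T}}|X_s-\varphi_s|^2=O(\sigma^2)$ by Gr\"onwall, after which both $\mathbb{P}(\mathcal{T}\le T)$ and the target probability follow from Markov. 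You instead localise via the \emph{ball} stopping time $\tau_R$, subtract the integral forms, and run a pathwise Gr\"onwall (no BDG, no second moments) to get $f(T)\le\sigma e^{3L_RT}\sup_{s\le T}|B_s|$; the price is a separate uniform-in-$\sigma$ tightness estimate for $\tau_R$. Both localisations are legitimate: the paper's proximity stopping time builds the compact in by hand and so avoids the tightness question altogether, while yours is more elementary at the comparison step but transfers the difficulty to the Lyapunov bound. Your bookkeeping for the Lipschitz constant on the ball of radius $2R$, the pathwise Gr\"onwall, and the final union-bound split are all correct.

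The one genuine gap is in the tightness step. You assert that the Gr\"onwall bound on $\mathbb{E}\,\mathcal{E}_T(X_T)$ together with Markov's inequality gives $\sup_{\sigma\le1}\mathbb{P}\bigl(\sup_{t\le T}V(X_t)\ge M\bigr)\to0$ as $M\to\infty$. This does not follow: Markov applied to $\mathbb{E}\,\mathcal{E}_T(X_T)$ controls only the terminal value $V(X_T)$, not the running supremum, since the trajectory could reach level $M$ and return before time $T$. The repair is short but needed. Either stop the Lyapunov computation at $\tau_R\wedge T$ rather than at $T$: from
\begin{equation*}
\mathbb{E}\,\mathcal{E}_{\tau_R\wedge T}\bigl(X_{\tau_R\wedge T}\bigr)\le V(x_0)e^{aT}
\end{equation*}
(uniformly in $\sigma\le1$, as you observed) and the fact that on the event $\{\tau_R\le T\}$ one has $\mathcal{E}_{\tau_R}(X_{\tau_R})\ge V(X_{\tau_R})\ge\inf_{|x|\ge R}V(x)=:m(R)$, deduce $\mathbb{P}(\tau_R\le T)\le V(x_0)e^{aT}/m(R)\to0$ as $R\to\infty$ by coercivity of $V$. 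Or, equivalently, note that $e^{-at}\mathcal{E}_t(X_t)$ is a non-negative local supermartingale under Assumption~\ref{assu:existence} and invoke Doob's maximal inequality. With that patch your argument is complete.
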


The proof of Proposition~\ref{prop:apple} is postponed to Section~\ref{proof:apple}. We deduce immediately the following.

\begin{cor}
\label{cor:apple}
We assume that the potentials $V$, $W$ and an open domain $\cDc$ satisfy the Assumptions~\ref{assu:regu}--\ref{assu:domain}. Then:
\begin{equation}
\label{applekasso}
\lim_{\sigma\to 0}\mathbb{P}\left(\tau\leq T_\kappa(\sigma)\right)=0\,.
\end{equation}
\end{cor}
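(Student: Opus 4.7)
The plan is to chain together Proposition~\ref{prop:Tkappa:uniform} and Proposition~\ref{prop:apple} via a simple compactness argument, exploiting Assumption~\ref{assu:domain}(i).

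First, Proposition~\ref{prop:Tkappa:uniform} provides a deterministic constant $T_\kappa<\infty$ with $T_\kappa(\sigma)\leq T_\kappa$ for every $\sigma\in(0,1)$. Consequently, for the whole argument it suffices to prove $\mathbb{P}(\tau\leq T_\kappa)\to 0$ as $\sigma\to 0$, since $\{\tau\leq T_\kappa(\sigma)\}\subseteq\{\tau\leq T_\kappa\}$.

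Next, by Assumption~\ref{assu:domain}(i) the deterministic curve $\varphi=(\varphi_t(x_0))_{t\geq 0}$ stays in the open domain $\mathcal{D}$ for all $t\geq 0$ and tends to $m\in\mathcal{D}$. Because $\varphi$ is continuous (as the solution of the zero-noise equation), its restriction $K:=\{\varphi_t(x_0):0\leq t\leq T_\kappa\}$ is a compact subset of $\mathcal{D}$, and the distance
\[
\xi_0:=\mathrm{dist}(K,\mathcal{D}^c)
\]
is therefore strictly positive. Fix any $\xi\in(0,\xi_0^2)$.

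Now I would argue by sample-path inclusion. On the event $\{\tau\leq T_\kappa\}$, continuity of $X$ forces $X_\tau\in\partial\mathcal{D}\subset\mathcal{D}^c$, while $\varphi_\tau(x_0)\in K$; hence $|X_\tau-\varphi_\tau(x_0)|\geq\xi_0>\sqrt{\xi}$, and in particular
\[
\{\tau\leq T_\kappa\}\;\subseteq\;\Bigl\{\sup_{t\in[0,T_\kappa]}|X_t-\varphi_t(x_0)|^2>\xi\Bigr\}.
\]
Combining the two inclusions and applying Proposition~\ref{prop:apple} with $T=T_\kappa$ and this $\xi>0$ yields
\[
\mathbb{P}(\tau\leq T_\kappa(\sigma))\;\leq\;\mathbb{P}\Bigl(\sup_{t\in[0,T_\kappa]}|X_t-\varphi_t(x_0)|^2>\xi\Bigr)\;\xrightarrow[\sigma\to 0]{}\;0,
\]
which is the desired statement. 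There is no real obstacle here beyond recognising that the uniform bound on $T_\kappa(\sigma)$ freezes the time horizon, so that the trajectory of the noiseless flow only sees a compact portion of $\mathcal{D}$; the rest is just continuity and the already-established fluid limit.
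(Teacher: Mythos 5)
Your argument is correct and is essentially the same as the paper's: both pick $T = T_\kappa$, use positivity of the distance between the compact flow segment $\{\varphi_t : t\in[0,T_\kappa]\}$ and $\mathcal{D}^c$ to choose $\xi$, observe that exiting before time $T_\kappa(\sigma)\leq T_\kappa$ forces $|X_t-\varphi_t|$ to exceed this threshold, and then invoke Proposition~\ref{prop:apple}. Your version is slightly more careful in making the invocation of Proposition~\ref{prop:Tkappa:uniform} explicit and in distinguishing $\xi$ from $\xi^2$ (the paper's statement of Proposition~\ref{prop:apple} involves the squared difference, a detail the paper's own proof glosses over).
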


\begin{proof}
It is sufficient to consider $T:=T_\kappa$  and $\displaystyle\xi:=\inf_{t\in[0;T_\kappa]}{\rm d}\left(\cDc^c;\varphi_t\right)>0$ in Proposition~\ref{prop:apple}. We thus have:

\begin{align*}
\mathbb{P}\left(\tau\leq T_\kappa(\sigma)\right)&=\PP\left(\inf\Big\{{\rm d}\left(\cDc^c;X_t\right)\,\,:\,\,t\in[0;T_\kappa(\sigma)]\Big\}=0\right)\\
&\leq\PP\left(\sup_{t\in[0;T_\kappa(\sigma)]}|X_t-\varphi_t|>\xi\right)\,,
\end{align*}
which converges towards $0$ as noise vanishes.
\end{proof}

In \cite{Tug14c,JOTP}, Tugaut has proved the Kramers' type law for the exit-time. He has used a coupling between the diffusion of interest ($X$ here) and another diffusion that is expected to be close to $X$ if the time is sufficiently large. The main difficulty with the considered self-stabilizing diffusion is in fact that we do not have a uniform (with respect to time) control of the law.

Here, we have proved that the nonlinear quantity appearing in \eqref{eq:sid} (that is $\frac{1}{t}\int_0^t \delta_{X_s} \rmd s$) remains stuck - with high probability - in a small ball (for the~$\mathbb{W}_{2k}$-distance) of center $\delta_m$ and radius $\kappa$ for any $t\geq T_\kappa(\sigma)$. The idea is thus to substitute~$\frac{1}{t}\int_0^t \delta_{X_s} \rmd s$ by $\delta_m$ and to compare the new diffusion with the initial one.

We introduce the diffusion $(Y_t)_{t\ge 0}$ such that $Y_t=X_t$ if $t\leq T_\kappa(\sigma)$ and for any $t\geq T_\kappa(\sigma)$
\begin{equation}
\label{sandra}
{\rm d}Y_t=\sigma{\rm d}B_t-\nabla V\left(Y_t\right)\rmd t
-\nabla W(Y_t-m)\rmd t.
\end{equation}

\begin{proposition}
\label{prop:coupling}
We assume that the potentials $V$, $W$ and an open domain $\cDc$ satisfy the Assumptions~\ref{assu:regu}--\ref{assu:domain}. Then, if $\kappa$ is small enough, we have for any $\xi>0$ 
\begin{equation}
\label{eq:prop:coupling}
\limsup_{\sigma\to0}\mathbb{P}\left(\sup_{T_\kappa(\sigma)\leq t\leq\exp\left[\frac{2H+10}{\sigma^2}\right]}\left|X_t-Y_t\right|\geq\xi\right)\leq\kappa^k\,,
\end{equation}
where we remind that $2k$ is the degree of the polynomial function $P$ introduced in Assumption~\ref{assu:growth1}.
\end{proposition}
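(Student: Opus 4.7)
My plan is to derive a pathwise differential inequality for $|X_t - Y_t|^2$ using It\^o's formula, exploit the uniform convexity in Assumption~\ref{assu:growth2} to obtain an exponential contraction, bound the error coming from replacing $\mu_t$ by $\delta_m$ via a first-order Taylor expansion and the moment control on $\mu_t$, and finally convert the pathwise estimate into the desired probability bound via Markov's inequality.

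First I would set $Z_t := X_t - Y_t$. On $[T_\kappa(\sigma), \infty)$ the two processes share the same Brownian motion and $Z_{T_\kappa(\sigma)} = 0$, so $Z$ has no martingale part and
\[
\frac{d}{dt}|Z_t|^2 = -2\left(Z_t, \nabla V(X_t) - \nabla V(Y_t)\right) - 2\left(Z_t, \mathcal{R}_t\right),
\]
with $\mathcal{R}_t := \int \nabla W(X_t - y)\,\mu_t(\rmd y) - \nabla W(Y_t - m)$. Adding and subtracting $\nabla W(X_t - m)$ inside $\mathcal{R}_t$, Assumption~\ref{assu:growth2} produces the two coercive terms $(Z_t, \nabla V(X_t) - \nabla V(Y_t)) \geq \rho|Z_t|^2$ and $(Z_t, \nabla W(X_t - m) - \nabla W(Y_t - m)) \geq \alpha|Z_t|^2$. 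The leftover piece $\mathcal{E}_t := \int[\nabla W(X_t - y) - \nabla W(X_t - m)]\,\mu_t(\rmd y)$ I would control via a first-order Taylor expansion, the polynomial bound on $\nabla^2 W$ (Assumption~\ref{assu:growth1}), and H\"older's inequality against the uniform high-order moment bound on $\mu_t$ from Proposition~\ref{prop:mu_t_K_alpha}, to obtain $|\mathcal{E}_t| \leq C(1 + |X_t|^{2k})\,\mathbb{W}_{2k}(\mu_t, \delta_m)$.

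Young's inequality then yields
\[
\frac{d}{dt}|Z_t|^2 \leq -(\rho+\alpha)|Z_t|^2 + (\rho+\alpha)^{-1}\Psi_t^2, \qquad \Psi_t := C(1 + |X_t|^{2k})\,\mathbb{W}_{2k}(\mu_t, \delta_m),
\]
and, since $Z_{T_\kappa(\sigma)} = 0$, Gronwall produces the pathwise bound $\sup_{T_\kappa(\sigma)\leq t\leq T}|Z_t|^2 \leq (\rho+\alpha)^{-2}\sup_{T_\kappa(\sigma)\leq s\leq T}\Psi_s^2$. To turn this into a probability estimate, I would combine $\mathbb{E}\left[\mathbb{W}_{2k}(\mu_s,\delta_m)\right] \leq \kappa$ on $s \geq T_\kappa(\sigma)$ (Proposition~\ref{prop:Tkappa:uniform}) with the uniform sub-exponential tail bound on $\mu_s$ from Proposition~\ref{prop:mu_t_K_alpha}, which in particular gives an a.s. uniform $L^\infty$ bound on $\mathbb{W}_{2k}(\mu_s, \delta_m)$. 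An interpolation between these $L^1$ and $L^\infty$ bounds yields high-moment control of $\Psi_s$, after which Markov's inequality delivers the $\kappa^k$ estimate.

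The hard part will be uniformity of the sup over $[T_\kappa(\sigma), T]$ with $T = \exp\left((2H+10)/\sigma^2\right)$, which is exponentially long in $1/\sigma^2$ and so kills any naive discretization-plus-union-bound argument: the per-time Markov estimate from $\mathbb{E}\mathbb{W}_{2k}(\mu_s, \delta_m)\leq \kappa$ is only linear in $\kappa$ and would be swamped by the exponentially many grid points. The rescue lies in the exponential kernel $e^{-(\rho+\alpha)(t-s)}$ inherited from the Gronwall convolution: it forgets values of $\Psi_s$ older than $O(1/(\rho+\alpha))$, so the ``sup over the long interval'' reduces effectively to a sliding local sup of width $O(1/(\rho+\alpha))$. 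Combining this localization with the $\sigma$-uniform almost sure decay rate of $\mathbb{W}_{2k}(\mu_s, \rho_\infty)$ from Proposition~\ref{thm:KK12} and the concentration $\mathbb{W}_{2k}(\rho_\infty, \delta_m) \to 0$ as $\sigma \to 0$ should close the argument and produce the required $\kappa^k$ bound.
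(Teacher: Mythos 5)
Your proposal follows the same overall strategy as the paper's proof: couple $X$ with the frozen diffusion $Y$, exploit the $\rho+\alpha$ uniform convexity to produce an exponential contraction in $|X_t-Y_t|^2$, bound the residual by a polynomial times a Wasserstein distance to $\delta_m$, and close with Markov's inequality. However, there are two substantive divergences, one of which creates a genuine obstacle.

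The first is the choice of decomposition. You add and subtract $\nabla W(X_t - m)$, which yields the two coercive pieces $\nabla V(X_t)-\nabla V(Y_t)$ and $\nabla W(X_t - m)-\nabla W(Y_t - m)$, and a residual
\[
\mathcal{E}_t=\int\bigl[\nabla W(X_t - y)-\nabla W(X_t - m)\bigr]\mu_t(\rmd y)\,,
\]
which you then bound by $C(1+|X_t|^{2k})\,\mathbb{W}_{2k}(\mu_t,\delta_m)$. The paper instead groups the drift as $W_{\mu_t}:=V+W\ast\mu_t$, applies the convexity $\nabla^2 W_{\mu_t}\geq(\rho+\alpha)\mathrm{Id}$ to the single difference $\nabla W_{\mu_t}(X_t)-\nabla W_{\mu_t}(Y_t)$, and leaves the residual in the form $\nabla W(Y_t - m)-\nabla W\ast\mu_t(Y_t)$, which depends only on $Y_t$. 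This is not cosmetic: $Y$ is a genuine time-homogeneous gradient diffusion with confining potential $V+W(\cdot-m)$, so Freidlin--Wentzell theory gives, for a ball $B(m,R)$ whose exit cost exceeds $H+6$, that $\mathbb{P}\bigl(\sup_{T_\kappa(\sigma)\leq t\leq e^{(2H+10)/\sigma^2}}|Y_t-m|\geq R\bigr)\to0$. This is the step that tames the polynomial prefactor over the exponentially long window. With $|X_t|^{2k}$ in the residual, no such off-the-shelf control is available: $X$ is precisely the self-interacting process whose behaviour over the long window is what you are trying to establish, so you would need a bootstrap (e.g., arguing on the event $\sup|X_t-Y_t|<\xi$ that $|X_t|\leq|Y_t|+\xi$) that your sketch does not supply. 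You also omit the Freidlin--Wentzell localisation of $Y$ entirely, which is an essential ingredient of the paper's proof.

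The second issue is the uniformity in $t$ that you flag as ``the hard part.'' Your instinct that the naive grid-plus-union-bound fails is correct, but the rescue you sketch is not a proof. The exponential kernel $e^{-(\rho+\alpha)(t-s)}$ does localise the Gronwall convolution to a window of width $O\bigl(1/(\rho+\alpha)\bigr)$ around each $t$, but taking the supremum over $t$ still requires that $\Psi_s^2$ has no spikes anywhere in $[T_\kappa(\sigma), e^{(2H+10)/\sigma^2}]$, and the number of disjoint local windows remains exponential in $1/\sigma^2$; the localisation does not, by itself, reduce the union-bound cost. Invoking the almost-sure decay rate of Proposition~\ref{thm:KK12} is the right instinct, but that statement is of the form ``a.s.\ for $t$ large enough,'' where the threshold is $\omega$-dependent, so turning it into the quantitative probability bound $\leq\kappa^k$ that the Proposition demands requires additional non-asymptotic content that neither your sketch nor the proposition itself provides. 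The paper's own treatment of this point (the set $\mathcal{A}_\kappa$, Markov from $\mathbb{E}[\mathbb{W}_{2k}(\mu_t,\delta_m)]\leq\kappa$) is also terse and arguably glosses over the uniformity in $t$, but it at least reduces the residual to quantities depending only on $Y$, which is the cleaner route; I would recommend adopting the paper's decomposition and the Freidlin--Wentzell control of $Y$ before attempting to shore up the uniformity issue.
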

The proof of Proposition~\ref{prop:coupling} is postponed to Section~\ref{proof:coupling}.

\subsection{Idea of the proof concerning the exit-time}
\label{ss:proofet:idea}
The idea of the proof is to use the fact that diffusions $Y$ and $X$ are close to each other at least after the deterministic stabilisation time $T_{\kappa}(\sigma)$ and until some fixed deterministic time $\exp\left\{\frac{2(H + 5)}{\sigma^2}\right\}$. We choose this time to be sufficiently big for our line of reasoning. We can control the proximity of these two diffusions by parameter $\kappa$, which represents how close the empirical measure $\mu_t$ and $\delta_m$ are. It was already shown in Corollary~\ref{cor:apple}, that with $\sigma \to 0$ probability of exiting before time $T_{\kappa}(\sigma)$ tends to zero. That means that we can focus on the dynamics after the stabilisation of the occupation measure happens. For the upper bound, we show that the event $\left\{\tau >\exp\left[\frac{2(H + \delta)}{\sigma^2}\right]\right\}$ is unlikely due to the fact that, for small $\sigma > 0$, the diffusion $Y$ leaves a bigger (than $\mathcal{D}$) domain before the time $\exp\left[\frac{2(H + \delta)}{\sigma^2}\right]$, which, given the closeness of $X$ and $Y$, gives $\left\{\tau \leq \exp\left[\frac{2(H + \delta)}{\sigma^2}\right]\right\}$. Same type of reasoning is used to prove the lower bound $\exp\left\{\frac{2(H - \delta)}{\sigma^2}\right\}$. 

\medskip

Let us now provide the rigorous proof in the next section. The proof of the intermediate lemmas are given in Subsection~\ref{ss:prooflemmas}.

\subsection{Proof for the exit-time result}
\label{ss:proofet}

Fix some $\delta, \kappa > 0$, decrease it if necessary to be $\delta < 5$. For the upper bound, consider the following inequality:
\begin{equation}\label{eq:tau_lowbound1}
    \mathbb{P}(\tau > e^{\frac{2(H + \delta)}{\sigma^2}}) \leq \mathbb{P}(\tau > e^{\frac{2(H + \delta)}{\sigma^2}}, \tau^Y_{\mathcal{D}^e} \leq e^{\frac{2(H + \delta)}{\sigma^2}}) + \mathbb{P}(\tau^Y_{\mathcal{D}^e} > e^{\frac{2(H + \delta)}{\sigma^2}}),
\end{equation}
where $\mathcal{D}^e$ is some enlargement of domain $\mathcal{D}$ such that its exit-cost is equal to~$H + \frac{\delta}{2}$, i.e.: 
\begin{equation*}
    \mathcal{D}^e:= \{x \in \R^d: V(x) + W(x - m) - V(m) < H + \frac{\delta}{2} \};
\end{equation*}
and $\tau^Y_{\mathcal{D}^e}$ is the first exit-time of diffusion $Y$ from this domain, i.e.:
\begin{equation*}
    \tau^Y_{\mathcal{D}^e} := \inf \{t\,\,:\,\,Y_t \notin \mathcal{D}^e\}.
\end{equation*}
Note, that domain $\mathcal{D}^e$ (since both $V$ and $W$ are continuous and convex) satisfies the usual assumptions (see \cite{DZ}) and $d_e:= d(\mathcal{D},\partial\mathcal{D}^e) > 0$. By classical result of Freidlin-Wentzell theory,
\begin{equation*}
    \mathbb{P}\left(\tau^Y_{\mathcal{D}^e} > \exp\left[\frac{2((H + \delta/2)  + \delta/2)}{\sigma^2}\right]\right) \xrightarrow[\sigma \to 0]{} 0.
\end{equation*}
Let us decrease $\sigma_\kappa$ if necessary, such that the quantity above will be less then $\sqrt{\kappa}$ for any $\sigma < \sigma_\kappa$. Moreover, the first probability in \eqref{eq:tau_lowbound1} can be bounded by:
\begin{equation*}
    \mathbb{P}(\tau^Y_{\mathcal{D}^e} \leq e^{\frac{2(H + \delta)}{\sigma^2}} < \tau) \leq \mathbb{P}(|X_{\tau^Y_{\mathcal{D}^e}} - Y_{\tau^Y_{\mathcal{D}^e}}| \geq d_e) \leq 2\kappa^k,
\end{equation*}
where we use Proposition \ref{prop:coupling} and decrease $\kappa$ and $\sigma_\kappa$ if necessary.

We approach the lower bound similarly and introduce the contraction of the domain $\mathcal{D}$:
\begin{equation*}
    \mathcal{D}^c := \{x \in \bRb^d: V(x) + W(x - m) - V(m) < H - \frac{\delta}{2} \}\subset\cDc.
\end{equation*}
If $\mathcal{D}^c$ turns out to be empty, decrease $\delta$. As previously, the domain $\mathcal{D}^c$ satisfies usual properties and has positive distance with the boundary of the initial domain, that is~$d_c := d(\mathcal{D}^c,\partial\mathcal{D}) > 0$. We introduce the exit-time from the contracted domain for diffusion~$Y$:
\begin{equation*}
    \tau_{\mathcal{D}^c}^Y := \inf \{t\,\,:\,\,Y_t \notin \mathcal{D}^c\},
\end{equation*}
and have the following estimate:
\begin{equation*}
    \begin{aligned}
    \mathbb{P}( \tau < e^{\frac{2(H - \delta)}{\sigma^2}}) & \leq \mathbb{P}(T_{\kappa}(\sigma) < \tau < e^{\frac{2(H - \delta)}{\sigma^2}} \leq \tau_{\mathcal{D}^c}^Y) \\
    &\quad\quad + \mathbb{P}(\tau \leq T_{\kappa}(\sigma))  + \mathbb{P}(\tau_{\mathcal{D}^c}^Y \geq e^{\frac{2((H - \delta/2) - \delta/2)}{\sigma^2}}) \\
    & \leq \mathbb{P}(|X_{\tau} - Y_{\tau}| \geq d_c) ++ \mathbb{P}(\tau \leq T_{\kappa}(\sigma))+  2\kappa^k \\
    & \leq 3\kappa^k + \mathbb{P}(\tau \leq T_{\kappa}(\sigma))\\
		& \leq 4\kappa^k,
    \end{aligned}
\end{equation*}
by Corollary~\ref{cor:apple}, with $\kappa$ and $\sigma_\kappa$ small enough. This leads to:
\begin{equation*}
    \mathbb{P}(e^{\frac{2(H - \delta)}{\sigma^2}} \leq \tau \leq e^{\frac{2(H + \delta)}{\sigma^2}}) \geq 1 - 7\kappa^k,
\end{equation*}
which proves the theorem if we consider $\kappa \to 0$, parameter that uniformly controls the convergence of $\sigma$ towards $0$.

\subsection{Proof of the intermediate results}
\label{ss:prooflemmas}

Several propositions are proven here.

\subsubsection{Proof of Proposition~\ref{prop:Tkappa:uniform}}
\label{proof:Tkappa:uniform}

\medskip

In the following, we remind the reader that we do not emphasize the dependence on $\sigma$, but it will appear everywhere in the computations.  

1. As was mentioned above, the invariant probability measure of self-interacting diffusion (and, at the same time, the weak-* limit of its empirical measure a.s.) is the unique solution to the equation
\begin{equation*}
    \mu_\infty = \Pi_\sigma(\mu_\infty),
\end{equation*}
where $\Pi_\sigma$ is defined as:
\[
\Pi_\sigma(\mu)(x) =  e^{-2(V+W*\mu)(x) /\sigma^2} / \int e^{-2(V+W*\mu)(z) /\sigma^2} \rmd z\,.
\]

The same invariant probability measure appears in the self-stabilizing diffusion, small-noise limit of which was studied in \cite{HT-EJP}. There, authors studied the case of double-wells potentials which is more general then our diffusion. In this paper the result, that can be transformed in our context as following, was proved. If the moments of invariant probability measures $\mu_\infty$ are uniformly bounded with respect to $\sigma$, then $\delta_m$ is the weak-* limit of $\mu_\infty$ with $\sigma \to 0$ a.s. Note, that indeed, moments of $\mu_t$ are uniformly bounded for any $t>0$. Indeed, this is due to the fact that $\mu_t \in K_{\alpha, C}$ for any $t > 0$ and for some $\alpha, C$ that do not depend on $\sigma$ (Proposition~\ref{prop:mu_t_K_alpha}). It proves that
\begin{equation*}
    \mu_\infty \xrightarrow[\sigma \to 0]{\text{weak-*}} \delta_m \text{  a.s.}
\end{equation*}

2. Let us consider the expectation $\E \left(\mathbb{W}_{2k}\left(\mu_t; \delta_m\right)\right)$. First, let us show its existence. To do that, we use the fact that for any $t>0$, $\mu_t \in K_{\alpha, C}$ almost surely and get
\begin{equation*}
    \mathbb{W}_{2k}\left(\mu_t; \delta_m\right) \leq \left( 2^{2k - 1} \int |x|^{2k} \mu_t(\rmd x) + 2^{2k - 1} |m|^{2k}\right)^{1/(2k)} \leq \text{Const},
\end{equation*}
where the last constant depends only on $\alpha, C, m$ and $k$. Therefore, since the random variable is bounded by a constant almost surely, expectation exists.

3. Now, we can finish the proof by separating the expectation of the distance between $\mu_t$ and $\delta_m$ into two parts and find the limit: 
\begin{equation*}
    \E\left(\mathbb{W}_{2k}\left(\mu_t; \delta_m\right)\right) \leq \E\left(\mathbb{W}_{2k}\left(\mu_t; \mu_\infty\right)\right) + \E\left(\mathbb{W}_{2k}\left(\mu_\infty; \delta_m \right)\right) \xrightarrow[\substack{t \to \infty \\ \sigma \to 0}]{} 0,
\end{equation*}
where the limit is not just iterated, but holds for the pair $(t, \sigma)$, since the rate of convergence of $\mu_t$ towards $\mu_{\infty}$ in time does not depend on $\sigma$, which was shown in Proposition \ref{thm:KK12}. Therefore, for any $\kappa > 0$ we can find $\sigma_0$ small enough and~$t_0$ big enough such that $T_\kappa(\sigma) < T_\kappa < \infty$ for any $\sigma < \sigma_0$, which does not only prove existence and finiteness of $T_\kappa(\sigma)$, but also its uniformness with respect to~$\sigma$.

\subsubsection{Proof of Proposition~\ref{prop:apple}}
\label{proof:apple}

\medskip

First of all, we fix some $\xi$ and introduce the following stopping time $\mathcal{T} := \inf\{t: |X_t^\sigma - \psi_t|^2 \geq \xi\}$. We apply It\^o formula and get the following result, for  $\omega \in \{\mathcal{T} > t\}$ (the choice of this event will be clear further) : 
\begin{equation*}\label{eq:X_t-psi_t}
\begin{aligned}
    |X_t - \psi_t|^2 &= 2\int_{0}^t (X_s - \psi_s, \rmd{X_s}-\rmd{\psi_s}) + d \sigma^2 t \\
    & \leq d\sigma^2t -2\int_0^t (X_s - \psi_s; \nabla V(X_s) - \nabla V(\psi_s)) \rmd s \\
    & \quad - \int_0^t\frac{2}{s} \int_0^s (X_s - \psi_s, \nabla W(X_s - X_z) - \nabla W(\psi_s - \psi_z))\rmd z\rmd s\\
    & \quad + 2\sigma \int_0^t (X_s - \psi_s , \rmd B_s).
 \end{aligned}
\end{equation*}   
Let $\text{Lip}_{\nabla W}^{K^\prime}$ be a Lipschitz constant of $\nabla W$ inside the following compact
\[
K^\prime:= \{x: |x - \psi_t|^2 \leq \xi \text{, for some } t > 0\}\,.
\]

Due to our assumptions, this set is indeed compact at least for small $\xi$, which we can decrease without any loss of generality. We remind that $\rho$ is the convexity constant of $V$. We thus have 
\begin{eqnarray}\label{eq:X_t-psi_tbis}
    |X_t - \psi_t|^2    
    & \leq & d\sigma^2 t - 2\rho\int_0^t |X_s - \psi_s|^2\rmd s + 2\sigma \int_0^t(X_s - \psi_s , \rmd B_s)\notag \\
    &+& \text{Lip}_{\nabla W}^{K^\prime}\int_0^t\frac{2}{s}\int_0^s \big(|X_s - \psi_s|^2 + |X_s - \psi_s|\cdot|X_z - \psi_z|\big) \rmd z\notag \\
    & \leq & d\sigma^2 t - 2\rho\int_0^t |X_s - \psi_s|^2\rmd s + 2\sigma \int_0^t(X_s - \psi_s , \rmd B_s) \\
    &+ & \text{Lip}_{\nabla W}^{K^\prime}\int_0^t\frac{1}{s}\int_0^s \big(3|X_s - \psi_s|^2 + |X_z - \psi_z|^2\big) \rmd z.\notag
\end{eqnarray}
Note then that by the Burkholder-Davis-Gundy inequality, we get for some constant $C > 0$:
\begin{equation*}
\begin{aligned}
    \E \left(\sup_{[0,t \wedge \mathcal{T}]} \left|2\sigma \int_0^s (X_z-\psi_z, \rmd B_z )\right| \right) &\le C \sigma^2 \E \sqrt{\int_0^{t\wedge \mathcal{T}} |X_s - \psi_s|^2 \rmd s}\\
    & \le C \sigma^2 \sqrt{\int_0^t \E \left(\sup_{z \in [0, s \wedge \mathcal{T}]}(|X_z - \psi_z|^2)\right) \rmd s}. 
\end{aligned}
\end{equation*}

Let us consider the following random variable: $\sup_{s \in [0; t \wedge \mathcal{T}]} |X_s - \psi_s|^2$. The fact that we consider the supremum before time $t \wedge \mathcal{T}$ gives us that for any~$\omega$ we consider only such $s$, that $s \leq \mathcal{T}(\omega)$, which in turn means that we can apply estimation \eqref{eq:X_t-psi_tbis} for any $s \in [0, t\wedge \mathcal{T}]$. We also remind that $t \leq T$ and derive:
\begin{equation*}
\begin{aligned}
    \E \left(\sup_{s \in [0; t \wedge \mathcal{T}]} |X_s - \psi_s|^2\right) &\leq d\sigma^2T+ C \sigma^2 \sqrt{\int_0^t \E \left(\sup_{z \in [0 , s \wedge \mathcal{T}]} (|X_z - \psi_z|^2)\right) \rmd z} \\
    &\quad + 4 \text{Lip}_{\nabla W}^{K^\prime} \int_0^t \E \left(\sup_{z \in [0, s \wedge \mathcal{T}]} |X_z - \psi_z|^2\right) \rmd s \\
    &\leq  d\sigma^2T + \frac{C \sigma^2}{2} \left[1 +  T\E\left( \sup_{s \in [0 , t \wedge \mathcal{T}]} (|X_s - \psi_s|^2)\right)  \right] \\
    & \quad + 4 \text{Lip}_{\nabla W}^{K^\prime} \int_0^t \E \left(\sup_{z \in [0, s \wedge \mathcal{T}]} |X_z - \psi_z|^2\right) \rmd s,
\end{aligned}
\end{equation*}
where in the last inequality we used $\sqrt{x} \leq (1 + x)/2$. Now, if we denote\\ $u_t := \E \left(\sup_{s \in [0; t \wedge \mathcal{T}]} |X_s - \psi_s|^2\right)$, we have
\begin{equation*}
    u_t \leq \frac{1}{1 - C T \sigma^2/2} \left(\frac{2Td + C}{2}\sigma^2 + 4 \text{Lip}_{\nabla W}^{K^\prime} \int_0^t u_s \rmd s \right),
\end{equation*}
for small enough $\sigma$ (such that $1 - C T \sigma^2/2 > 0$). Thus, using Gr\"onwall lemma, we get
\begin{equation}\label{eq:ut_bound}
    u_t \leq \frac{(2Td + C) \sigma^2}{2(1 - C T \sigma^2/2)}\exp\left\{\frac{4 \text{Lip}_{\nabla W}^{K^\prime} }{1 - C T \sigma^2/2} T\right\} = O(\sigma^2).
\end{equation}
This in particular means, that $\E\left( \sup_{s \in [0; T \wedge \mathcal{T}]} |X_s - \psi_s|^2 \right) \leq O(\sigma^2)$. Nevertheless, to show the necessary result, we have to get rid of the stopping time $\mathcal{T}$ in the previous equation. It is sufficient to show, that $\mathbb{P} (\mathcal{T} \leq T) \xrightarrow[\sigma \to 0]{} 0$. 

Indeed, by its definition, $\mathcal{T}$ is the first time when the difference $|X_t - \psi_t|^2$ reaches $\xi$. But under the assumption $\mathcal{T} \leq T$ and due to \eqref{eq:ut_bound}, by decreasing $\sigma$ we can control $|X_t - \psi_t|^2$ and make it small enough, such that $|X_{\mathcal{T}} - \psi_{\mathcal{T}}|^2 < \xi$ (in some sense), which contradicts the definition of $\mathcal{T}$. Rigorously, 
\begin{equation*}
    \mathcal{T} < T \Rightarrow \sup_{[0, \mathcal{T}\wedge T]} |X_s - \psi_s|^2 = \sup_{[0, \mathcal{T}]} |X_s - \psi_s|^2 \ge \xi.
\end{equation*}
Thereby, 
\begin{equation*}
    \mathbb{P} (\mathcal{T} < T) \leq \mathbb{P}(\sup_{[0, \mathcal{T}\wedge T]} |X_s - \psi_s|^2 \ge \xi) \leq O(\sigma^2),
\end{equation*}
by Markov inequality.

To conclude the proof of Proposition~\ref{prop:apple}, we consider
\begin{equation*}
\begin{aligned}
    \mathbb{P}\left(\sup_{t\in [0;T]}|X_t-\psi_t(x_0)|^2>\xi\right) & \leq \mathbb{P}\left(\sup_{t\in [0;T]}|X_t-\psi_t(x_0)|^2>\xi, \mathcal{T} > T\right) \\
    & \quad\quad + \mathbb{P} \left( \mathcal{T} \leq T \right) \\
    & \leq \mathbb{P}\left(\sup_{t\in [0;T \wedge \mathcal{T}]}|X_t-\psi_t(x_0)|^2>\xi \right) + O(\sigma^2) \\
    & \leq O(\sigma^2),
\end{aligned}
\end{equation*}
by Markov inequality and \eqref{eq:ut_bound}, which completes the proof.

\subsubsection{Proof of Proposition~\ref{prop:coupling}}
\label{proof:coupling}

Let us define 
\[
W_m(x):=V(x)+ W(x-m)\quad\mbox{and}\quad W_{\mu_t}(x):=V(x)+W\ast\mu_t(x)\,,
\]
with the occupation measure $\mu_t:=\frac{1}{t}\int_0^t\delta_{X_s}\rmd s$. For any~$t\geq T_\kappa(\sigma)$, we have

\begin{equation*}
\rmd\left|X_t-Y_t\right|^2=-2\left(X_t-Y_t\,;\,\nabla W_{\mu_t}(X_t)-\nabla W_m(Y_t)\right)\rmd t.
\end{equation*}
We thus have
\begin{align*}
\frac{\rmd}{\rmd t}\left|X_t-Y_t\right|^2=&-2\left( X_t-Y_t\,;\,\nabla W_{\mu_t}\left(X_t\right)-\nabla W_{\mu_t}\left(Y_t\right)\right)\\
&+2\left( X_t-Y_t\,;\,\nabla W(Y_t-m)-\nabla W\ast\mu_t(Y_t)\right)\,.
\end{align*}
However, $\nabla^2W_{\mu_t}=\nabla^2V+\nabla^2 W\ast\mu_t\geq(\rho+\alpha){\rm Id}$ with $\rho+\alpha>0$. So, putting $\gamma(t):=\left|X_t-Y_t\right|^2$, Cauchy-Schwarz inequality yields to
\begin{equation*}
\gamma'(t)\leq-2\left(\alpha+\rho\right)\gamma(t)+2\sqrt{\gamma(t)}\left|\nabla W(Y_t-m)-\nabla W\ast\mu_t(Y_t)\right|\,.
\end{equation*}
However, by the growth condition \eqref{domination} on $W$, we have for any probability measures $\mu,\nu$ the following control $$\left|\nabla W\ast\mu(x)-\nabla W\ast\nu(x)\right|\leq C\left(1+|x|^{2k}\right)\mathbb{W}_{2k}^{2k}\left(\mu,\nu\right)$$ where $2k$ is the degree of the polynomial $P$ introduced in Assumption~\ref{assu:growth1}. We introduce the set 
\begin{equation*}
\mathcal{A}_\kappa:=\left\{\omega\in\Omega\,\,:\,\,\mathbb{W}_{2k}^{2k}\left(\mu_t,\delta_m\right)\leq\kappa^k\right\}\,.
\end{equation*}

By Markov inequality, we have $\PP\left(\mathcal{A}_\kappa^c\right)\leq7\kappa^k$ then $\mathbb{P}\left(\mathcal{A}_\kappa\right)\geq1-\kappa^k$. This implies for any $t\geq T_\kappa(\sigma)$ and for any $\omega\in\mathcal{A}_\kappa$:
\begin{equation*}
\gamma'(t)\leq-2\left(\alpha+\rho\right)\gamma(t)+2C\kappa^k\sqrt{\gamma(t)}\left(1+\left|Y_t\right|^{2k}\right)\,.
\end{equation*}
 However, $\gamma(t)=0$ for any $t\leq T_\kappa(\sigma)$. This means that
$$\left\{t\geq0\,\,:\,\,\gamma(t)>\frac{C^2\kappa^{2k}\left(1+\left|Y_t\right|^{2k}\right)^2}{(\alpha+\rho)^2}\right\}\subset\Big\{t\geq0\,\,:\,\,\gamma'(t)<0\Big\}\,.$$

By \cite[Lemma~3.7]{BRTV}, we deduce that
\begin{equation*}
\sup_{T_\kappa(\sigma)\leq t\leq\exp\left[\frac{2H+10}{\sigma^2}\right]}\gamma(t)\leq\frac{C^2\kappa^{2k}\left(1+\sup_{T_\kappa(\sigma)\leq t\leq\exp\left[\frac{2H+10}{\sigma^2}\right]}\left|Y_t\right|^{2k}\right)^2}{(\alpha+\rho)^2}\,,
\end{equation*}
if $\omega\in\mathcal{A}_\kappa$. We now consider $R>0$ such that the exit-cost of the diffusion~$Y$ from the ball of center $m$ and radius $R$ is at least $H+6$, meaning that
\[
\inf \{V(x) + W(x~\!-~\!m) - V(m): x \in B(m, R)\} \geq H + 6\,.
\]

Then, by Freidlin-Wentzell theory, we deduce that
\[
\lim_{\sigma\to 0}\mathbb{P}\left(\sup_{T_\kappa(\sigma)\leq t\leq\exp\left[\frac{2H+10}{\sigma^2}\right]}\left|Y_t - m\right|\geq R\right)=0\,.
\]
However, we have
\begin{align*}
&\mathbb{P}\left(\sup_{T_\kappa(\sigma)\leq t\leq\exp\left[\frac{2H+10}{\sigma^2}\right]} \left|X_t-Y_t\right|\geq\xi\right)\\ 
&\quad \leq \mathbb{P}\left(\sup_{T_\kappa(\sigma)\leq t\leq\exp\left[\frac{2H+10}{\sigma^2}\right]}\left|Y_t - m\right|\geq R\right)\\
&\quad + \mathbb{P}\left(\sup_{T_\kappa(\sigma)\leq t\leq\exp\left[\frac{2H+10}{\sigma^2}\right]}\gamma(t)\geq\xi^2,\sup_{T_\kappa(\sigma)\leq t\leq\exp\left[\frac{2H+10}{\sigma^2}\right]}\left|Y_t - m\right|<R\right)\\
&\quad+ \mathbb{P}\left(\mathcal{A}_\kappa^c\right)\,.
\end{align*}
The first term tends to $0$ as $\sigma$ goes to $0$. The second term is equal to $0$ provided that
\[
\xi>\frac{C\kappa^{k} \big(1+ 2^{2k - 1}(R + |m|^{2k}) \big)}{\alpha+\rho}\,.
\]

In other words, if $\kappa$ is small enough, the second term is equal to $0$ uniformly with respect to $\sigma$. The third term is less than $\sqrt{\kappa}$. This concludes the proof.

\subsection{Proof for the exit-location result}
\label{ss:proofel}

We can apply Theorem~\ref{dell} to the level sets of the potential $W_m:=V+W\ast\delta_m$.

By definition of $\mathcal{N}$ in Corollary~\ref{hphp}, there exists a constant $\xi>0$ such that
\[
\inf_{z\in\mathcal{N}}\left(V(z)+W(z-m)-V(m)\right)=H+3\xi\,.
\]

We introduce the set 
\begin{eqnarray*}
\mathcal{K}_{H+2\xi}:=\left\{x\in\mathbb{R}^d\,\,:\,\,V(x)+W(x-m)-V(m)<H+2\xi\right\}\,.
\end{eqnarray*}
If we denote by $\tau_\xi$ the first exit-time of $X$ from $\mathcal{K}_{H+2\xi}$, then we obtain
\begin{eqnarray}
\label{oz}
\lim_{\sigma \to0}\mathbb{P}\left\{\exp\left[\frac{2}{\sigma^2}\left(H+2\xi-\eta\right)\right]<\tau_\xi<\exp\left[\frac{2}{\sigma^2}\left(H+2\xi+\eta\right)\right]\right\}=1\,,
\end{eqnarray}
for any $\eta>0$. By construction of $\mathcal{K}_{H+2\xi}$, $\mathcal{N}\subset\mathcal{K}_{H+2\xi}^c$, which implies
\begin{align*}
\mathbb{P}\left\{X_{\tau}\in\mathcal{N}\right\}\leq&\mathbb{P}\left\{X_{\tau}\notin\mathcal{K}_{H+2\xi}\right\}\\
\leq&\mathbb{P}\left\{\tau_{\xi}\leq\tau\right\}\\
\leq&\mathbb{P}\left\{\tau_{\xi}\leq\exp\left[\frac{2(H+3\xi)}{\sigma^2}\right]\right\}+\mathbb{P}\left\{\exp\left[\frac{2H+\xi}{\sigma^2}\right]\leq\tau\right\}\,.
\end{align*}
Applying \eqref{oz} with $\eta:=\xi$ to the first term and Theorem \ref{dell} to the second one, we obtain the result.

\appendix
\section{Discussions on extension}
\label{ss:extension}

In this Section, we provide some ideas on how the results of the paper can be extended.

\medskip

First, we can modify our equation by adding a reflection at the boundary as it was done for McKean-Vlasov case, for example in \cite{ADRRST}. Note, that if the boundary of reflection contains the closure of the domain from which we  want to exit, the result for exit-time does not change and is immediate, since, unlike in McKean-Vlasov case, there is no interaction with the law of the process for self-interacting diffusion.

\medskip

Second, in this paper we take a diffusion coefficient which is proportional to the identity matrix. However, it could be relevant for some problems related to optimization to consider a more general diffusion coefficient.

\medskip

Third, in the current work we do not derive an Arrhenius law, that is to say the convergence of $\frac{\sigma^2}{2}\log(\EE[\tau])$ towards $H$. To obtain such a result, it requires to use the large deviations techniques instead of the coupling method that is used here. 

\medskip

Fourth, we point out that the potentials $V$ and $W$ are both assumed to be uniformly convex. The techniques used in this paper are not adapted for a more general case. One shall use the techniques close to \cite{DZ,FW98} to relax the convexity assumptions. 

\medskip

Finally, we could also study SDEs where the nonlinear part of the drift is more general. that is to say:
\begin{equation}
\label{eq:sid:ultrageneral}
X_t=x_0+\sigma B_t-\int_0^t\nabla V(X_s)ds-\int_0^tb(X_s,\mu_s)ds\,,
\end{equation}
where $\mu_s:=\frac{1}{t}\int_0^t\delta_{X_s}ds$ and $x\mapsto b(x,\mu)$ is differentiable whereas $\mu\mapsto b(x,\mu)$ is $L$-differentiable, see~\cite{trucdelions} and references therein. However, this will require some adaptations of our methods.

\medskip

{\bf Acknowledgments: }\emph{This work is supported by the French ANR grant METANOLIN (ANR-19-CE40-0009).}

\bibliographystyle{alpha}
\bibliography{ADMKT}

\end{document}